\newcolumntype{H}{>{\setbox0=\hbox\bgroup}c<{\egroup}@{}}
\newcolumntype{L}{>{$}l<{$}} % math-mode version of "l" column type
\newcolumntype{C}{>{$}c<{$}} % math-mode version of "c" column type
\DeclareMathOperator{\lcm}{lcm}
\newtheorem*{theorem*}{Theorem}
\newtheorem*{corollary*}{Corollary}
\newtheorem{theorem}{Theorem}[section]
\newtheorem{lemma}[theorem]{Lemma}
\newtheorem{proposition}[theorem]{Proposition}
\theoremstyle{definition}
\newtheorem{definition}[theorem]{Definition}
\newtheorem{notation}[theorem]{Notation}
\newtheorem{setup}[theorem]{Setup}
\newtheorem{remark}[theorem]{Remark}
\title[Birational transformations and $\C^*$-actions]{Geometric realizations of birational \\ transformations via $\C^*$-actions}
\author[Occhetta]{Gianluca Occhetta}
\address{Dipartimento di Matematica, Universit\`a degli Studi di Trento, via
Sommarive 14 I-38123 Povo di Trento (TN), Italy}
\email{gianluca.occhetta@unitn.it, eduardo.solaconde@unitn.it}
\author[Romano]{Eleonora A. Romano}
\address{Dipartimento di Matematica, Universit\`a degli Studi di Genova, via Dodecaneso 35, I-16146, Genova (GE), Italy}
\email{eleonoraanna.romano@unige.it}
\author[Sol\'a Conde]{Luis E. Sol\'a Conde}
\author[Wi\'sniewski]{Jaros\l{}aw A. Wi\'sniewski}
\address{Instytut Matematyki UW, Banacha 2, 02-097 Warszawa, Poland}
\email{J.Wisniewski@uw.edu.pl}
\subjclass[2010]{Primary 14L30; Secondary 14E30, 14L24, 14M17}
\thanks{First and third author supported by PRIN project ``Geometria delle variet\`a algebriche''. Fourth author supported by Polish National Science Center project 2016/23/G/ST1/04282.}
\DeclareMathOperator{\HH}{H}
\def\lcm{\operatorname{lcm}}
\newcommand\CC{{\mathbb{C}}}
\newcommand\PP{{\mathbb{P}}}
\def\C{{\mathbb C}}
\def\P{{\mathbb P}}
\def\Q{{\mathbb Q}}
\def\R{{\mathbb R}}
\def\Z{{\mathbb Z}}
\def\cC{{\mathcal C}}
\def\cI{{\mathcal I}}
\def\cN{{\mathcal{N}}}
\def\cO{{\mathcal{O}}}
\def\cU{{\mathcal U}}
\def\cV{{\mathcal V}}
\def\cY{{\mathcal Y}}
\def\Q{{\mathbb{Q}}}
\def\operatorname#1{\mathop{\rm #1}\nolimits}
\def\DA{{\rm A}}
\def\DB{{\rm B}}
\def\DC{{\rm C}}
\def\DD{{\rm D}}
\def\DE{{\rm E}}
\def\Proj{\operatorname{Proj}}
\def\Exc{\operatorname{Exc}}
\def\Hom{\operatorname{Hom}}
\def\Pic{\operatorname{Pic}}
\def\Hom{\operatorname{Hom}}
\def\rk{\operatorname{rk}}
\def\NE{{\operatorname{NE}}}
\def\Nef{{\operatorname{Nef}}}
\def\Mov{{\operatorname{Mov}}}
\def\SL{\operatorname{SL}}
\def\SP{\operatorname{Sp}}
\def\GX{\mathcal{G}\!X}
\newcommand{\pb}{\ar@{}[dr]|{\text{\pigpenfont J}}}
\def\ol{\overline}
\newcommand{\xleftrightarrow}[2][]{\ext@arrow 3359\leftrightarrowfill@{#1}{#2}}
\newcommand{\xdasharrow}[2][->]{
\tikz[baseline=-\the\dimexpr\fontdimen22\textfont2\relax]{
\node[anchor=south,font=\scriptsize, inner ysep=1.5pt,outer xsep=2.2pt](x){#2};
\draw[shorten <=3.4pt,shorten >=3.4pt,dashed,#1](x.south west)--(x.south east);
}}
\newcommand\m{{\mathfrak m}}
\newcommand\lra{\longrightarrow}
\def\Mo{\operatorname{\hspace{0cm}M}}
\begin{document}
\begin{abstract}
In this paper we study varieties admitting torus actions as geometric realizations of birational transformations. We present an explicit construction of these geometric realizations for a particular class of birational transformations, and study some of their geometric properties, such as their Mori, Nef and Movable cones. %Finally we study examples of these geometric realizations that appear naturally in the setting of representation theory.
\end{abstract}
\maketitle
\tableofcontents

\section{Introduction}\label{sec:intro}

Birational transformations of the projective space are called Cremona transformations, in honor of Luigi Cremona, who studied them in the 1860s (cf. \cite{Cre1,Cre2}). Among them, the %celebrated 
standard Cremona transformation of the plane
$$\mathbb{P}^2\ni [x_0,x_1,x_2]\dashrightarrow [x_1x_2,x_0x_2,x_0x_1]\in \mathbb{P}^2$$
was apparently known to Pl{\"u}cker and Magnus in 1830s, as indicated in Coble's survey written $100$ years ago (see \cite{Coble} and references therein). 

In the 1990s Thaddeus and Reid linked birational geometry and the rising Minimal Model Theory to Mumford's Geometric Invariant Theory from the 1960s (GIT, \cite{MFK}), which was based on Hilbert's fundamentals from 1890s, see \cite{Thaddeus}. Subsequently, the concept of the algebraic cobordism was introduced by Morelli in the toric case, and by W{\l}odarczyk in full generality in the late 1990s \cite{Wlodarczyk}. The highlight of this approach was a proof of the factorization conjecture for birational maps of smooth projective varieties by Abramovich et al, %.and his collaborators, 
\cite{Wlodarczyk-et-al}.

In the present paper we adopt a view on birational maps similar to the one of Morelli and W{\l}odarczyk. Namely, we describe a birational map among complex projective varieties of dimension $n$ (in particular a Cremona transformation) in terms of an algebraic action of the group $\mathbb{C}^*$ on a (projective) variety of dimension $n+1$; such a variety will be called a {\em geometric realization of the birational map}. For example, let us consider the action of $\mathbb{C}^*$ with coordinate $t$ on the product of three copies of $\mathbb{P}^1=\mathbb{C}\cup\{\infty\}$, each with (inhomogenenous) coordinate $y_i$: $$\mathbb{C}^*\times(\mathbb{P}^1\times\mathbb{P}^1\times\mathbb{P}^1)\ni (t,(y_1,y_2,y_3))\longrightarrow %t\cdot (x_1,x_2,x_3)=
 (ty_1,ty_2,ty_3)\in\mathbb{P}^1\times\mathbb{P}^1\times\mathbb{P}^1$$
There exist $8$ fixed points of this action which are the triples $(y_1,y_2,y_3)$ with $y_i=0$ or $\infty$ and one can illustrate the action with the following diagram, representing the fixed points and the shortest nontrivial orbits (for clarity we do not label every fixed point).
\begin{center}
  \begin{tikzpicture}[scale=1]
    \draw[thin,fill=red!20] (4.7,0)--(5.1,-0.9)--(5.1,0.9)--cycle;
    \draw[thin,fill=blue!20] (1.4,0)--(1,-1)--(1,1)--cycle;
    \draw[thin,fill=black!10] %
    (3,-2)--(3.2,-1.2)--(3.2,1.2)--(3,2)--(2.8,1.2)--(2.8,-1.2)--cycle;
    \node[label=right:{$(0,0,0)$}] (a) at (6,0) {$\bullet$};
    \node[label=right:{$(\infty,0,0)$}] (b1) at (4,-2) {$\bullet$};
    \node%[label={$(0,\infty,0)$}]
    (b2) at (4,0) {$\bullet$};
    \node[label=right:{$(0,0,\infty)$}] (b3) at (4,2) {$\bullet$};
    \node[label=left:{$(\infty,\infty,0)$}] (c1) at (2,-2) {$\bullet$};
    \node%[label={$(0,\infty,0)$}]
    (c2) at (2,0) {$\bullet$};
    \node[label=left:{$(0,\infty,\infty)$}] (c3) at (2,2) {$\bullet$};
    \node[label=left:{$(\infty,\infty,\infty)$}] (d) at (0,0) {$\bullet$};
    \draw[thick,->] (a)--(b1); \draw[thick,->] (a)--(b2);  \draw[thick,->] (a)--(b3);
   \draw[thick,->] (b1)--(c1); \draw[thick,->] (b2)--(c1);  \draw[thick,->] (b2)--(c3);  \draw[thick,->] (b3)--(c3);
   \draw[thick,dashed, ->] (b1)--(c2); \draw[thick,dashed,->] (c2)--(d);  \draw[thick,dashed,->] (b3)--(c2);
   \draw[thick,->] (c1)--(d); \draw[thick,->] (c3)--(d);
  \end{tikzpicture}
\end{center}
The ``flow" of the action in this diagram is from right to left; the rightmost point is called the source and the leftmost point the sink of the action. The three vertical shaded sections of the diagram represent three GIT quotients, depending on the stability conditions. The triangular section on the right-hand side represents $\mathbb{P}^2$ parametrizing orbits diverging from the source, hence with homogeneous coordinates $[x_1:x_2:x_3]$. The section on the left-hand side represents $\mathbb{P}^2$ parametrizing orbits converging at $\infty$ to the sink, hence with homogeneous coordinates $[x_0^{-1}:x_1^{-1}:x_2^{-1}]$. If a general orbit has coordinates $[x_0:x_1:x_2]$ close to the source then its coordinates close to the sink are $[x_0^{-1}:x_1^{-1}:x_2^{-1}]=[x_1x_2:x_0x_2:x_0x_1]$.
We say that the variety $\P^1\times \P^1\times\P^1$ endowed with the $\C^*$-action introduced above is a geometric realization of the standard Cremona transformation.

In addition, the middle hexagonal section describes the intermediate (geometric) GIT quotient. This is the blowup of $\mathbb{P}^2$ in three points, that resolves the Cremona map; that is, it can be blown down to each of the two $\mathbb{P}^2$'s. In higher dimensions the blowdowns have to be replaced by more complicated birational operations like flips. Yet, as a general principle, presentation of a birational map by variation of quotients of a suitably chosen $\mathbb{C}^*$-action allows us to decompose the map as a sequence of simpler birational transformations. Based on this principle, W{\l}odarczyk proved his decomposition theorem.

Let us note that, if a birational map admits a geometric realization, then, universal objects for GIT theory, such as Chow or Hilbert quotients, if they are smooth, should provide a strong factorization of the map. This idea has been used in \cite{MMW} to resolve the birational map determined by the inversion of matrices.  

A natural question that appears in this setting, that is the main motivation of this paper, is how to construct {\em explicit} geometric realizations of birational maps. The problem has been already tackled in \cite{WORS1}, where the question has been solved for Atiyah flips \cite[Section~6]{WORS1} and for special quadro-quadric Cremona transformations (see Section \ref{sec:BW3} below). In this paper we go one step further in this direction by constructing smooth geometric realizations for bispecial transformations, a class of birational maps that contains special quadro-quadric Cremona transformations.

\medskip
\noindent{\bf Outline.} In the first part of the paper we  present background material on $\mathbb{C}^*$-actions and describe the birational transformations they define (Section \ref{sec:prelim}). In particular, in Section \ref{sec:defgeomreal} we introduce the central subject of the paper: namely, the definition of geometric realization of a birational transformation. In Section \ref{sec:BW3} we recall the classification of equalized actions of bandwidth three with isolated sink and source, and the relation with special quadro--quadric Cremona transformations. This classification leads to the following statement (Theorem \ref{thm:bw3}, see also Notation \ref{not:RH} for conventions regarding rational homogeneous varieties):

\begin{theorem*}\label{thm*:bw3}
Any quadro-quadric Cremona transformation with smooth nonempty fundamental locus admits a geometric realization, given by an equalized $\C^*$-action of criticality three in one of the following varieties:
$$
\P^1\times Q^{n-1},\qquad \DC_3(3),\qquad \DA_5(3),\qquad  \DD_6(6),\qquad \DE_7(7).
$$
\end{theorem*}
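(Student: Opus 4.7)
The approach is to match two classifications. On one side, the quadro-quadric Cremona transformations with smooth nonempty fundamental locus are classified (by work of Ein--Shepherd-Barron, extended by Pirio--Russo through the theory of rank three cubic Jordan algebras) into precisely five families: the quadric inversion of $\P^n$ (whose fundamental locus is a smooth quadric $Q^{n-2}$), and the four exceptional transformations of $\P^5,\P^8,\P^{14},\P^{26}$ associated to the four Severi varieties $v_2(\P^2)$, $\P^2\times\P^2$, $G(1,5)$, $\OO\P^2$. On the other side, the classification of equalized $\C^*$-actions of criticality three with isolated sink and source on smooth projective varieties has already been recalled in Section~\ref{sec:BW3} (Theorem~\ref{thm:bw3}), and produces exactly the five ambient varieties listed in the statement. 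The numerical coincidence of the two lists is the heart of the matter, so the proof amounts to exhibiting, for each item of the first list, a realization of the indicated type, and checking that the birational map between extremal GIT quotients is the expected one.

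The plan, case by case, is as follows. First, for the quadric inversion, I would take on $\P^1\times Q^{n-1}$ the $\C^*$-action which acts with weights $(+1,-1)$ on the two fixed points of $\P^1$ and with weights $(+1,0,\dots,0,-1)$ on $Q^{n-1}$ via a one-parameter subgroup of $\SO(n+1)$ fixing a smooth quadric $Q^{n-3}\subset Q^{n-1}$; one checks that the sink and source are isolated, the criticality equals three, and the two extremal Bia{\l}ynicki--Birula quotients are $\P^n$, with the variation-of-GIT map being the quadric inversion with base locus $Q^{n-2}$. For each of the four exceptional cases, I would use the $\C^*$-action on the rational homogeneous variety $G/P$ (respectively $\DC_3(3),\DA_5(3),\DD_6(6),\DE_7(7)$) associated, via Notation~\ref{not:RH}, to the third marked node of the corresponding Dynkin diagram: this yields an equalized action of criticality three with sink and source given by two disjoint Severi-type subvarieties, or isolated points in the projected picture. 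The extremal BB quotients, by standard representation-theoretic computations on $G/P$, are projective spaces of the required dimension, and the source/sink components coincide with the Severi varieties sitting inside them.

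The last ingredient, and the main technical obstacle, is the explicit identification of the induced birational map between the two extremal GIT quotients with the quadro-quadric Cremona transformation. The key point is to show that the two natural linear systems on $G/P$ (ample generators of the extremal rays of the movable cone pulled back from the two quotients) are intertwined by a linear system of quadrics whose base locus is the corresponding Severi variety. This can be done by exploiting the $G$-equivariance of these linear systems: they are given by fundamental representations of $G$, so their restrictions to the BB cells at the source and sink are governed by the weights of the $\C^*$-action, and the quadratic nature of the map follows from the criticality being three. The compatibility with the classical Jordan-algebraic description of the exceptional quadro-quadric Cremonas can then be read off directly from the root data. Once this matching is in place in each of the five cases, the theorem follows.
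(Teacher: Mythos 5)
Your proposal is essentially the paper's own argument: the theorem is obtained by matching the Ein--Shepherd-Barron / Pirio--Russo classification of quadro-quadric Cremona transformations with smooth fundamental locus against the classification of equalized bandwidth-three $\C^*$-actions with isolated extremal fixed points recalled in Section \ref{sec:BW3}, whose proof (Steps 4--6 of the sketch) already identifies the induced birational maps with exactly these Cremona transformations and shows each is realized by the corresponding variety in the list. One small correction: the transformation realized on $\P^1\times Q^{n-1}$ does not have irreducible fundamental locus ``a smooth quadric $Q^{n-2}$'' --- a quadro-quadric Cremona cannot have an irreducible quadric hypersurface (or divisor in a hyperplane) as base locus; its fundamental locus is the disjoint union of an isolated point and a codimension-two quadric contained in a hyperplane, and it is precisely this reducibility that forces the appeal to Pirio--Russo beyond Ein--Shepherd-Barron's classification of the four Severi cases.
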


The last two sections are devoted to the construction of geometric realizations of bispecial transformations. %\label{part:two}
First (Section \ref{sec:geomreal}) we %introduce the definition of geometric realization of a birational transformation, and 
show how to construct geometric realization in the case in which the transformation is bispecial between smooth projective varieties. The main result of this section is the following:
\begin{theorem*}\label{thm*:bispecial}
Let $\psi$
%:Y_-\dashrightarrow Y_+$ 
be a bispecial birational transformation between smooth polarized varieties. Then there exists a  smooth B-type equalized geometric realization of $\psi$ of criticality three. % with respect to a certain ample line bundle.
\end{theorem*}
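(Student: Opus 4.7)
The plan is to construct a smooth $\C^*$-variety $W$ directly from the bispecial data of $\psi$, as a blow-down of a projectivization of a rank-two bundle over the common resolution of $\psi$.

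Let $\psi:(X,L)\dashrightarrow (X',L')$ be bispecial, with smooth fundamental loci $Y\subset X$ and $Y'\subset X'$. By the bispecial hypothesis, $\psi$ is resolved by a single smooth blowup $\sigma:\hX\to X$ of $Y$ and $\sigma':\hX\to X'$ of $Y'$, sharing a common smooth exceptional divisor $E\subset\hX$ equipped with two compatible projective bundle structures $E\to Y$ and $E\to Y'$. I would first consider a rank-two bundle $\cE=\cO_{\hX}(-D)\oplus\cO_{\hX}(D)$, where $D$ is a divisor on $\hX$ built from $E$ and the pullback polarizations so that the difference of characters encodes the jump between $\sigma^\ast L$ and $(\sigma')^\ast L'$. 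Set $W_0:=\P_{\hX}(\cE)$, and let $\C^*$ act on $\cE$ by scaling the two summands with opposite weights; this induces a $\C^*$-action on $W_0$ whose fixed locus is the two tautological sections $\hX_0,\hX_\infty\subset W_0$.

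Next I would verify that $\hX_0$ and $\hX_\infty$ can be contracted, equivariantly and simultaneously, along the two $\P$-bundle structures of $E$: the contraction of $\hX_0$ corresponds to the blowdown $\sigma$, and the contraction of $\hX_\infty$ corresponds to $\sigma'$. The point is that, by the choice of $\cE$, the normal bundle of each section in $W_0$ restricts to $E$ as a line bundle having degree $-1$ on the fibers of one of the two $\P$-bundle structures on $E$; this is precisely the Fujiki--Nakano condition that allows a smooth blow-down in the category of projective varieties. Carrying out both contractions produces a smooth projective variety $W$ with an induced $\C^*$-action whose fixed locus has three components: the image of $\hX_0$, canonically identified with $X$ (the source), the image of $\hX_\infty$, identified with $X'$ (the sink), and a residual intermediate component supported on the strict transform of $E$. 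By the symmetric choice of weights, the action is equalized with all weights equal to $\pm 1$ at the three fixed components, which is the B-type condition with criticality three.

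Finally, to identify the birational map between source and sink, I would invoke Thaddeus' variation of GIT: the two extremal chambers of the $\C^*$-linearization on $W$ give GIT quotients canonically isomorphic to $(X,L)$ and $(X',L')$, and the induced birational map between them is $\psi$ by construction of $\cE$. The intermediate chamber provides a resolution of $\psi$ mapping to $\hX$.

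The main obstacle is verifying (a) that the two sections $\hX_0,\hX_\infty$ admit the required Fujiki--Nakano-type contractions to smooth varieties, and (b) that the resulting morphism to $X$ (resp.\ $X'$) is exactly the blowup $\sigma$ (resp.\ $\sigma'$), so that the induced birational map between the extremal quotients is literally $\psi$ and not some other variety sharing the same boundary data. Both are ultimately normal bundle computations on $E$; they are controlled by the bispeciality of $\psi$ together with the smoothness of $Y,Y'$, which forces the two conormal bundles of $E$ in $\hX$ to be compatible line bundles pulled back from the two sides.
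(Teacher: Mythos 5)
Your overall strategy --- form a $\P^1$-bundle over the common resolution of $\psi$ with a fiberwise $\C^*$-action, then modify it using Nakano-type contractions so that the sink and source become $Y_-$ and $Y_+$ --- is the same as the paper's (which takes $P=\P_W(\pi_-^*H_-\oplus\pi_+^*H_+)$, projectively equivalent to your $\P(\cO(-D)\oplus\cO(D))$). However, there are two genuine gaps. First, your starting hypothesis is wrong: a bispecial transformation is resolved by two blowups $\pi_\pm:W\to Y_\pm$ whose exceptional divisors $E_-$ and $E_+$ are \emph{distinct} divisors of $W$ (they intersect, but do not coincide). Indeed, if $E_-=E_+=E$ then the two relations in (\ref{eq:cr1}) would give $(m_-m_+-1)\pi_-^*H_-\sim(m_++1)E$, making the $\pi_-$-exceptional divisor $E$ nef and big, which is absurd since $m_-m_+>1$. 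There is no single divisor carrying two $\P$-bundle structures, so the whole construction of $\cE$ from ``the'' divisor $E$, and the claim that the fixed locus of the final variety has three components, cannot be carried out. In fact three fixed components force at most three distinct $L$-weights, i.e.\ criticality at most two, contradicting the criticality three you are trying to achieve; the correct picture has \emph{four} fixed components $Y_-,S_-,S_+,Y_+$, with one inner component for each of $E_-$ and $E_+$.

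Second, the contraction step does not work as stated. The sections $\hX_0,\hX_\infty\subset W_0$ are divisors isomorphic to the resolution $W$ itself; they are not projective bundles over $X$ or $X'$, so Fujiki--Nakano cannot be applied to them, and ``contracting $\hX_0$'' would have to mean collapsing only $E_-\subset\hX_0$, which is a codimension-two locus of $W_0$ --- a small contraction whose target is in general singular. The paper resolves this by a genuine flip: blow up $E_\pm$ inside the ambient $\P^1$-bundle, check via the normal bundle computation $\cN_{E_-/P}|_{F_-}\simeq\cO_{F_-}(-1)^{\oplus 2}$ (which uses the bispecial relations (\ref{eq:cr1})--(\ref{eq:cr3})) that Nakano's criterion applies to the \emph{exceptional divisor of that blowup} in the other fiber direction, and blow down onto $B_\pm=\P_{Z_\pm}(\cO\oplus\cO(\mu_\pm H_\pm))$. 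Only after this two-step small modification do the sink and source become $Y_\pm$. Finally, the projectivity of the resulting variety is not automatic from a VGIT argument on $W_0$, since the glued small modification is a priori only an algebraic space; the paper must exhibit an explicit ample class $H+\epsilon_-Y_-+\epsilon_+Y_+$ by intersecting with all closures of one-dimensional orbits. These points need to be supplied before your argument is complete.
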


Furthermore, we show that such a geometric realization is unique under some additional assumptions (see Theorem \ref{thm:gr1} for the precise statement). 
 
Then, in Section \ref{sec:cones}, we study some geometric properties of these realizations, encoded in their Nef, Mori and Movable cones (Propositions \ref{prop:cones}, \ref{prop:movx}). For simplicity we focus on the case of birational transformations between varieties of Picard number one. In particular we discuss when the sink and the source of the geometric realizations may be equivariantly contracted to points, and when the (non B-type) geometric realization of $\psi$ obtained in this way is smooth (Theorems \ref{thm:bispecial}, \ref{thm:bispecial2}).

\section{ $\C^*$-actions}\label{sec:prelim}

This section contains some  results, notation and conventions regarding $\C^*$-actions and the birational transformations that they induce. We refer the interested reader to \cite{BB,CARRELL,WORS1,WORS3,B_R} for details and further results.

%\subsection{Generalities on torus actions}\label{ssec:torusactions}

Let $X$ be an irreducible complex algebraic variety. By {\em $\C^*$-action on $X$} we will always refer to a morphical  $\C^*$-action on $X$, that is an action of the group $\C^*$ on $X$, whose defining map
$$\C^*\times X\to X$$
is a morphism of algebraic varieties. The action of $t\in\C^*$ on $x\in X$ will be always denoted by $tx$. We will usually consider $X$ to be proper.

\begin{remark}\label{rem:extraction}
We will use later on the fact that if $Y\subset X$ is a $\C^*$-invariant closed subset of $X$, then the action  extends to the blowup of $X$ along $Y$:
$$
\beta:X^\flat\to X.
$$
This holds also for the projectivization of  the $\cO_X$-algebra $\bigoplus_{m\geq 0}\cI_{Y}^m$ with respect to a grading different from the natural one. In other words, the $\C^*$-action on $X$ extends to any weighted blowup of $X$ along a $\C^*$-invariant closed subset.
\end{remark}

\subsection{Fixed-point components}\label{sssec:fxd}
We will denote by $X^{\C^*}$ the set of fixed components of the action, whose union is a closed set in $X$, and by $\cY$ the set of its irreducible  components. If $X$ is proper, then we may consider, for a general point $x\in X$, the limiting points
$$
\lim_{t\to 0}t^{-1}x, \quad \lim_{t\to 0}tx \in X,
$$
called, respectively, the {\em sink} and the {\em source} of the orbit $\C^*x$. 
The components $Y_-,Y_+$ containing them are called, respectively, the {\em sink} and the {\em source of the action}. The rest of the fixed-point components are called {\em inner}, and their set is denoted by $\cY^\circ$.
Moreover, if $X$ is smooth and projective, every $Y\in\cY$ is smooth (cf. \cite[Main Theorem]{IVERSEN}).

\subsection{Linearizations and weight maps}\label{sssec:lin} Given a line bundle $L$ on $X$, a {\em linearization}  on $L$ of the $\C^*$-action on $X$ is an action of $\C^*$ on $L$ such that the projection map $L\to X$ is equivariant and the action is linear on the fibers of $L\to X$. If $X$ is normal and projective, linearizations exist for any line bundle  on $X$ (see \cite[Proposition 2.4]{KKLV}, and the Remark right after it).  Given a fixed-point component $Y\in \cY$, $\C^*$ acts on $L_{|Y}$ by multiplication with a character $\mu_L(Y)\in \Mo(\C^*)=\Hom(\C^*,\C^*)=\Z$, which is called {\em weight of the linearization on $Y$}.
The map $\mu_L:\cY\to\Z$ obviously depends on the linearization on $L$, but one may easily prove that two linearizations on a line bundle $L$ differ by a character of $\C^*$; then we may always choose the linearization so that $\mu_L$ has a prescribed value on a certain fixed-point component. Usually we will choose the linearization so that $\mu_L(Y_-)=0$, and call $\mu_L(Y)$ the {\em $L$-weight of the action on $Y$}, for every $Y\in \cY$. Note that with this convention we may write $\mu_{mL}=m\mu_L$ for every $m\in \Z$.

If $L$ is ample the minimum and maximum value of the weights are achieved at the sink and the source of the action, respectively (see \cite[Remark 2.12]{WORS1}). In particular, the difference $\delta:=\mu_L(Y_+)-\mu_L(Y_-)$, that is called the {\em bandwidth of the $\C^*$-action on $(X,L)$}, is independent of the chosen linearization. 

\subsection{Actions on polarized pairs. Criticality}\label{sssec:polar} A $\C^*$-action on a projective variety $X$, together with a linearization on an ample line bundle $L$ on $X$ will be referred to as a {\em $\C^*$-action on the 
polarized pair $(X,L)$}. Denoting by 
$$
\{a_0=0,\dots,a_r=:\delta\} \quad\mbox{(with $a_0<\dots<a_r$)}$$
the set $\mu_L(\cY)\subset \Z$ of $L$-weights of the action, we set:
$$
Y_i:=\bigcup_{\mu_L(Y)=a_i}Y\subset X.
$$
The number $r$, which does not change if we substitute $L$ by a multiple, is called the {\em criticality of the $\C^*$-action} on $(X,L)$. The minimum of the criticalities of the $\C^*$-actions on the pairs $(X,L)$, when $L$ varies in the set of ample divisors on $X$, will be called the {\em criticality} of the $\C^*$-action on $X$.

\subsection{The Bia{\l}ynicki-Birula decomposition}\label{sssec:BBcells} We refer to \cite{CARRELL} for a complete account on the Bia{\l}ynicki-Birula decomposition and its applications, and to \cite{BB} for the original reference; let us describe here the contents of this decomposition theorem. Assume that $X$ is a proper variety admitting a $\C^*$-action as above.
Given any set $S \subset X$, we consider:
\begin{equation}\label{eq:BBcells}
%\begin{array}{l}
X^\pm(S):=\{x\in X|\,\, \lim_{t^{\pm 1}\to 0} tx\in S\},%\\[3pt]
%B^\pm(S):=\overline{X^\pm(S)},
%\end{array}
\end{equation}
When $S=Y$ with $Y\in \cY$ we call $X^\pm(Y)$ the positive and the negative {\it Bia{\l}ynicki-Birula cells} of the action at $Y$, so that we have a decomposition: 
$$X=\bigsqcup_{Y\in\cY}X^+(Y)=\bigsqcup_{Y\in\cY}X^-(Y).$$ 

If $X$ is smooth, $\C^*$ acts (fiberwise linearly) on the normal bundle $\cN_{Y|X}$ of $Y$ in $X$. The weights of the action on the fibers of the bundle are the same for every point of $Y$, and $\cN_{Y|X}$ splits into two subbundles, on which $\C^*$ acts with positive and negative weights, respectively:
\begin{equation}
\cN_{Y|X}\simeq \cN^+(Y)\oplus \cN^-(Y).
\label{eq:normal+-}
\end{equation}
Then the action of $\C^*$ on $X^{\pm}(Y)$ is equivariantly isomorphic to the induced action on the bundles $\cN^{\pm}(Y)$.

\begin{remark}\label{rem:innernu}
If $X$ is smooth, we will set, for every $Y\in\cY$
\begin{equation} \label{eq:Vpm}
\nu^{\pm}(Y):=\rk \cN^{\pm}(Y), 
\end{equation}
so that from the Bia{\l}ynicki-Birula decomposition it follows that:
\begin{equation} \label{eq:Vpm2}
\dim(X)=%\nu^{+}(Y)+\nu^{-}(Y)+\dim(Y)=
\dim(\ol{X^+(Y)})+\nu^-(Y)=\dim(\ol{X^-(Y)})+\nu^+(Y)\end{equation}
for any $Y\in \cY$. 
In particular, this implies that $\nu^\pm(Y)\geq 1$ for every inner fixed-point component $Y\in \cY^\circ$; in fact, if, for instance, $\nu^+(Y)=0$ for some $Y\in \cY$, then (\ref{eq:Vpm2}) says that $X^-(Y)$ is dense in $X$, and so $Y$ is the sink of the action. 

\end{remark}

\subsection{Geometric quotients}\label{ssec:quot}

Recall that a linearization of the $\CC^*$-action on a line bundle $L$ over $X$ provides a weight decomposition on $\HH^0(X,L)$: 
%$$\HH^0(X,L)=\bigoplus_{u\in\widetilde{\Gamma}}\HH^0(X,L)_u,$$ with $\widetilde{\Gamma}$ being the set of the weights of the $\CC^*$-action on $\HH^0(X,L)$. 
$$\HH^0(X,L)=\bigoplus_{u\in\Z}\HH^0(X,L)_u,$$
where $\HH^0(X,L)_u\subset \HH^0(X,L)$ stands for the vector subspace of $\C^*$-weight $u$.
If $L$ is ample and globally generated, one may show (cf. \cite[Lemma~2.4 (3)]{BWW}) that the extremal values for which $\HH^0(X,L)_u\neq \{0\}$ are $\mu_L(Y_-)=0$, $\mu_L(Y_+)=\delta$. In fact, the quoted statement shows that the convex hull $\Delta(L)$ of the weights $\mu_L(Y)$, $Y\in \cY$ (which in our case is $[0,\delta]$) coincides with the convex hull $\Gamma(L)$ of the weights of the induced action on $\HH^0(X,L)$. %We conclude by noting that in our case $\Delta(L)=[0,\delta]$.

Let us now recall how to construct algebraically the geometric (and semigeometric quotients) of the polarized pair $(X,L)$; we refer the interested reader to \cite[Section~2.2]{WORS3} and \cite{BBS2} for details. Given a $\C^*$-action on a pair $(X,L)$, with $\mu_L(\cY)=\{a_0=0,\dots,a_r=\delta\}$, and given any rational number $\tau\in[0,\delta]\cap \Q$ we may consider the graded $\C$-algebra 
$$
R_{L,\tau}:=\bigoplus_{\substack{m\geq 0\\m\tau\in\Z}}\HH^0(X,mL)_{m\tau}.
$$
It is known that the projectivization of $R_{L,\tau}$ is a GIT quotient of an open set $X^s\subset X$ of stable points by the action of $\C^*$. More precisely, $\Proj(R_{L,\tau})$ is a geometric quotient if $\tau\neq a_i$ for every $i$, and a semigeometric quotient otherwise. Furthermore, given two rational numbers $\tau,\tau'\in(a_{i-1},a_i)$ for some $i$, the varieties $\Proj(R_{L,\tau}),\Proj(R_{L,\tau'})$ are isomorphic, and it then makes sense to define, for every $i\in{1,\dots,r}$:
$$\GX_i:=\Proj(R_{L,\tau_i}),\quad i=1,\dots,r,$$
where $\tau_i$ is any rational number in $(a_{i-1},a_i)$; the corresponding open set of stable base points is denoted $X^s_i\subset X$. We call the varieties $\GX_1,\dots,\GX_r$ the {\em geometric quotients of the polarized pair} $(X,L)$. %We refer the interested reader to \cite[Section~2.2]{WORS3} and \cite{BBS2} for details.  

\begin{remark}\label{rem:extrquot}
Following \cite{BBS2}, the open sets $X^s_i\subset X$ can be described in terms of sections of the ordered set of fixed-point components $\cY$; in this way one may see for instance that $\GX_1,\GX_r$ parametrize the $1$-dimensional orbits converging, respectively, to the sink and the source, that is  $X^-(Y_-)\setminus Y_-$, $X^+(Y_+)\setminus Y_+$. In particular, these two quotients do not depend on the particular polarization $L$, but only on the $\C^*$-action on $X$.
\end{remark}

\subsection{Equalized actions}\label{sssec:equal}

A $\C^*$-action on a proper variety $X$ is called {\em  equalized} at $Y\in\cY$ if for every $x\in \big(X^{-}(Y)\cup X^{+}(Y)\big)\setminus Y$  the isotropy group of the $\C^*$-action on $x$ is trivial, and is called {\em equalized} if it is equalized at every fixed-point component. By the Bia{\l}ynicki-Birula Theorem, if the variety $X$ is smooth, the equalization of the action is equivalent to the weights of the action on $\cN^{\pm}(Y)$ being all equal to $\pm 1$ for every fixed-point component $Y\in\cY$ (as presented in \cite[Definition~1]{RW}).

Note that if a $\C^*$-action is equalized, then the closure of any $1$-dim\-ens\-ional orbit is a smooth rational curve (see \cite[Corollary 2.9]{WORS1}), whose $L$-degree may be then computed in terms of the weights at its extremal points. The precise relation among these values has been introduced in \cite{RW} as the AM vs. FM formula, that we present here only for equalized actions (see \cite[Corollary 3.2 (c)]{RW}):

\begin{lemma}[AM vs. FM]\label{lem:AMFM}
Let $X$ be a smooth projective variety admitting a nontrivial equalized $\C^*$-action, $L$ an ample line bundle on $X$, and let $C$ be the closure of a $1$-dimensional orbit, whose sink and source are denoted by $x_-$ and $x_+$. Then $C$ is a smooth rational curve of $L$-degree  equal to $\mu_L(x_+)-\mu_L(x_-)$.
\end{lemma}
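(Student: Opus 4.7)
The plan is as follows. The statement has two assertions: first, that $C$ is a smooth rational curve, and second, the degree formula $\deg(L|_C)=\mu_L(x_+)-\mu_L(x_-)$. The first is already contained in \cite[Corollary~2.9]{WORS1}, cited in the excerpt just before the statement, so the bulk of the work is the degree computation.

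First I would reduce the problem to one on $\P^1$ by restricting everything to $C$. The $\C^*$-action on $X$ preserves $C$ (it preserves the orbit and hence its closure), so there is an induced action on $C\simeq\P^1$ fixing exactly $x_-$ and $x_+$. Since the action on $X$ is equalized, it is in particular equalized along $C$, which forces the induced action on $C$ to have weights $-1$ and $+1$ on the tangent spaces at $x_-$ and $x_+$ respectively. Up to the choice of a coordinate, the induced action on $C$ is therefore the standard one, $t\cdot[u:v]=[tu:v]$ with $x_-=[0:1]$ and $x_+=[1:0]$.

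Next I would analyze the restricted linearized line bundle $L|_C$. A linearization of the $\C^*$-action on $L$ restricts to a linearization on $L|_C$, so we have an equivariant line bundle on $\P^1$ with its standard action. Write $L|_C\simeq\cO_{\P^1}(d)$ with $d=\deg(L|_C)$. A direct computation (or the classification of $\C^*$-equivariant line bundles on $\P^1$) shows that the weight of such a linearization at $[1:0]$ differs from the weight at $[0:1]$ by exactly $d$: concretely, if we model $\cO_{\P^1}(d)$ by the $\C^*$-module of homogeneous polynomials of degree $d$ in $u,v$ with the weight on $u$ shifted by $1$ relative to the weight on $v$, then the fibers at the two fixed points are one-dimensional $\C^*$-modules whose weights differ by $d$. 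Since the weights of $L|_C$ at $x_-,x_+$ are by definition $\mu_L(x_-)$ and $\mu_L(x_+)$ (the linearization of $L$ on $C$ is compatible with the one on $X$ by restriction), this gives
\[
\deg(L|_C)=\mu_L(x_+)-\mu_L(x_-),
\]
which is exactly the $L$-degree of $C$.

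There is no genuine obstacle: the only subtle point is keeping track of signs and the fact that the equalization hypothesis is precisely what rules out a higher multiplicity cover $\P^1\to C$ that would otherwise rescale the formula. Once the induced action on $C$ is pinned down as the standard $\C^*$-action on $\P^1$, the formula is the classical statement about weights of $\C^*$-linearized line bundles on $\P^1$, and the smooth-rational-curve assertion follows verbatim from the cited corollary.
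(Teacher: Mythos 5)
Your argument is essentially correct, but note that the paper does not actually prove Lemma~\ref{lem:AMFM}: it is imported from \cite[Corollary 3.2 (c)]{RW}, with the smoothness assertion quoted from \cite[Corollary 2.9]{WORS1}, so there is no in-paper proof to compare against. What you wrote is the standard argument and supplies the right content: equalization gives trivial isotropy on the open orbit, hence the (equivariant) normalization $\P^1\to C$ is an isomorphism onto the smooth curve $C$ carrying the standard weight-one action, and the degree formula is then the classical fact that the weights of a $\C^*$-linearized line bundle on $\P^1$ at the two fixed points differ by the degree; the ampleness of $L$ (maximum weight at the source) fixes the sign. The one detail to tidy is the labelling of fixed points in your model: with the paper's convention the source is $\lim_{t\to 0}tx$ and the sink is $\lim_{t\to 0}t^{-1}x$, so for the action $t\cdot[u:v]=[tu:v]$ one has $x_+=[0:1]$ and $x_-=[1:0]$, the opposite of what you wrote. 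This does not affect the final formula, which you state correctly, but the intermediate sign bookkeeping should be made consistent with it.
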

%\begin{proof}
%The smoothness follows from the Bia{\l}ynicki-Birula decomposition (see \cite[Theorem~4.2]{CARRELL}), while the degree statement is precisely \cite[Corollary 3.2 (c)]{RW}.
%\end{proof}

\subsection{B-type torus actions}\label{sssec:Btype}% and  birational transformations}\label{ssec:Btype}

\begin{definition}\label{def:bord}
Let  $X$ be a smooth projective variety admitting a $\C^*$-action. We say that the action is of {\em B-type} if its sink and source, $Y_-,Y_+$, are codimension one subvarieties.
\end{definition}

If we start from a non B-type $\C^*$-action, we may perform first a $\C^*$-equivariant divisorial extraction (Remark \ref{rem:extraction}) centered in the extremal fixed-point components $Y_{\pm}$ in order to get a B-type action, with the same $1$-dimensional orbits as in the original variety. In the equalized setting one can prove  (cf. \cite[Lemma~3.10]{WORS1}): % the following:

\begin{lemma}\label{lem:blowup}
Given a nontrivial equalized $\C^*$-action on a smooth projective variety $X$ with sink and source $Y_\pm$, the induced  $\C^*$-action  on the blowup of $X$ along $Y_{\pm}$ is of B-type.
\end{lemma}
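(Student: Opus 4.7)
The plan is to show that the exceptional divisors of the blowup of $X$ along $Y_-\sqcup Y_+$ are pointwise fixed and form the new sink and source of the induced action; since they are divisors, this will establish the B-type property.

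First, I would extend the $\C^*$-action to $\beta\colon X^\flat\to X$, the blowup along the $\C^*$-invariant subvariety $Y_-\sqcup Y_+$, using Remark~\ref{rem:extraction}. Since fixed components of a $\C^*$-action on a smooth projective variety are smooth, $X^\flat$ is smooth, and the exceptional divisors are $E_\pm=\P(\cN_{Y_\pm|X})$.

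Next I would analyze the action on the normal bundles. Since $Y_-$ is the sink, $\overline{X^-(Y_-)}=X$, and formula \eqref{eq:Vpm2} forces $\nu^+(Y_-)=0$, so $\cN_{Y_-|X}=\cN^-(Y_-)$; symmetrically $\cN_{Y_+|X}=\cN^+(Y_+)$. Equalization then implies that $\C^*$ acts on each fiber of $\cN_{Y_-|X}$ (resp.\ $\cN_{Y_+|X}$) by the scalar $t^{-1}$ (resp.\ $t$). Consequently, the induced action on the projectivizations $E_\pm$ is trivial, so $E_\pm$ are pointwise fixed divisors in $X^\flat$.

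The final step is to identify $E_-$ with the sink and $E_+$ with the source of the action on $X^\flat$. Since $\beta$ is $\C^*$-equivariant, proper, and an isomorphism off $E_-\cup E_+$, a general orbit in $X^\flat$ is identified via $\beta$ with a general orbit in $X$, whose sink and source lie in $Y_-$ and $Y_+$, respectively. Its closure in $X^\flat$ must then have its sink in $\beta^{-1}(Y_-)=E_-$ and its source in $\beta^{-1}(Y_+)=E_+$. Because the $E_\pm$ are pointwise fixed and irreducible (each being the projectivization of a vector bundle over the irreducible variety $Y_\pm$), they are single fixed components of the action and must therefore coincide with the new sink and source.

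The main thing to watch is that equalization is really used: without it, $\C^*$ would act nontrivially on the fibers of $\cN_{Y_\pm|X}$, the induced action on $E_\pm$ would have fixed loci of positive codimension, and the new sink and source would be strictly smaller subvarieties of $E_\pm$, so the B-type property would fail.
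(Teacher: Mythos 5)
Your proof is correct. The paper itself does not prove this lemma --- it simply cites \cite[Lemma~3.10]{WORS1} --- and your argument is the natural one: equalization forces the weights on $\cN_{Y_-|X}=\cN^-(Y_-)$ (resp.\ $\cN_{Y_+|X}=\cN^+(Y_+)$) to all equal $-1$ (resp.\ $+1$), so the induced action on the exceptional divisors $E_\pm$ is trivial, and equivariance of $\beta$ identifies these pointwise-fixed irreducible divisors with the new sink and source. Your closing remark slightly overstates the role of equalization (what is really needed is only that the weights on each of $\cN_{Y_\pm|X}$ are mutually equal, which is weaker than equalization), but this does not affect the proof.
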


\begin{remark}\label{rem:weighted}
The blowup operation substitutes the extremal fixed-point components $Y_{\pm}$ with the projective bundles $\P(\cN_{Y_{\pm}|X})$, which, by the Bia{\l}ynicki-Birula Theorem, are isomorphic to the  quotients of $X^{\pm}(Y_{\pm})\setminus Y_{\pm}$ by the action of $\C^*$, that is, the geometric quotients $\GX_1,\GX_r$ introduced in Section \ref{ssec:quot}. If the action is not equalized, then a similar construction works if one uses  the weighted blowup with respect to the weights of the $\C^*$-action on $\cN_{Y_{\pm}|X}$ instead of the standard blowup; see \cite[$\S$3.2 and Lemma 4.4]{B_R} for details. % in the non-equalized case. 
\end{remark}

Let us finally note the following:

\begin{remark}\label{rem:Btypeequalized}
If a B-type action is equalized, then the closure $C$ of any orbit in $X$ satisfies that $C\cdot Y_{\pm}=1$ (see \cite[Remark~3.3]{WORS1}). 
\end{remark}

\subsection{Geometric realizations of birational maps}\label{sec:defgeomreal}

Given any nontrivial $\C^*$-action on a polarized pair $(X,L)$, with $X$ normal and projective, we consider the geometric quotients $\GX_i=X^s_i/\C^*$ defined in Section \ref{ssec:quot}. 
Given $i<r$, the intersection $X_i^s\cap X_{i+1}^s$ is a $\C^*$-invariant open set in $X$, whose images into $\GX_i$ and $\GX_{i+1}$ are nonempty open sets providing a birational map $\GX_i\dashrightarrow \GX_{i+1}$. Summing up, we have birational maps:  
%The intersections of the open sets $X_i^s\subset X$  provide birational maps between the varieties $\GX_i$:
$$
\xymatrix@C=14mm{\GX_1\ar@{-->}[r]& \GX_2\ar@{-->}[r]&\quad\dots\quad \ar@{-->}[r]& \GX_{r-1}\ar@{-->}[r]&\GX_r}
$$
In particular we have a birational map $\psi:\GX_1\dashrightarrow \GX_r$; we will call $\GX_1$ and $\GX_r$ the extremal geometric quotients. As in Remark \ref{rem:extrquot}, $\psi$ does not depend on the chosen ample line bundle $L$, and it is called the {\em birational map associated to the $\C^*$-action on $X$}.

\begin{remark}\label{rem:birmap}
In the case in which $X$ is smooth and the $\C^*$-action on $X$ is of B-type, %along the sink and the source, then and the action is 
then the Bia{\l}ynicki-Birula theorem tells us that the extremal geometric quotients of $X$ are precisely the sink and the source $Y_\pm$. Then, if we apply this to the blowup $X^\flat$ along the sink and the source of a smooth variety $X$ endowed with an equalized $\C^*$-action (see Lemma \ref{lem:blowup}) we obtain a birational map:
$$
\xymatrix{\psi:\P(\cN_{Y_-|X}^\vee)\ar@{-->}[rr]&&\P(\cN_{Y_+|X}^\vee).}
$$
As noted above (see \cite[Lemma 4.4]{B_R}), a similar approach is possible in the non-equalized case via weighted blowups.
\end{remark}

\begin{definition}\label{def:geomreal}
Given a birational map between two normal projective varieties $\psi:M_-\dashrightarrow M_+$, a {\em geometric realization of} $\psi$ is a normal projective variety endowed with a nontrivial $\C^*$-action, whose extremal geometric quotients are isomorphic to $M_\pm$, and whose associated birational map is $\psi$.
\end{definition}

\begin{remark}\label{rem:uniquegeorel}
One cannot expect birational maps to have unique geometric realizations, since performing birational $\C^*$-equivariant transformations on a geometric realization of a map, we obtain other realizations of the same map. For instance, assume that we start with a faithful $\C^*$-action on a smooth variety $X$ that has an inner fixed-point component $Y$. The $\C^*$-action on $X$ naturally extends to the blowup $X^\flat$ of $X$ along $Y$, and the birational maps associated to the actions of $\C^*$ on $X$ and $X^\flat$ are obviously the same. 
\end{remark}

\section{Equalized $\C^*$-actions with isolated extremal fixed points}\label{sec:BW3}

In this section we review the  classification theorem for bandwidth three varieties admitting an equalized $\C^*$-action with isolated extremal fixed points. This result was firstly motivated in \cite{RW} by the LeBrun--Salamon conjecture. In fact, bandwidth three varieties appear naturally as subvarieties of some Fano contact manifolds (see also \cite[proof of Theorem 5.1]{WORS2}). The important point that we want to stress  is the tight relation of a $\C^*$-action with the birational transformation induced by it, which has been fundamental in the proof of this result. We will explore deeply this idea in the following sections.

Before stating the main result of this Section, let us introduce the notation regarding the rational homogeneous varieties that will appear later on. Our notation is compatible with our main source for this Section, \cite{WORS1}, and essentially presents every rational homogeneous variety as the closed orbit of a projective representation of a certain semisimple group.

\begin{notation}\label{not:RH}
In the following description we use the numbering of fundamental weights provided in \cite[Planche I--IX]{Bourb}.
\begin{itemize}[leftmargin=*]
\item $\DA_5(2),\DA_5(3)$ denote, respectively, the Grassmannians of $2$ and $3$-dimensional linear subspaces in a complex vector space of dimension $6$. They are the closed orbits of the projective representations of $\SL(6)$ (whose Lie algebra is determined by the Dynkin diagram $\DA_5$) given by the fundamental weights $\omega_2,\omega_3$. 
\item $\DC_3(3)$ denotes the Lagrangian Grassmannian parametrizing $3$-dimensional vector spaces in a complex vector space of dimension $6$ that are isotropic with respect a given nondegenerate skew-symmetric form. It is the closed orbit of the projective representation of $\SP(6)$ (whose Lie algebra is given by $\DC_3$) given by the fundamental weight $\omega_3$. 
\item $\DD_6(6)$ denotes the Spinor variety parametrizing $6$-dimensional vector spaces in a complex vector space of dimension $12$ that are isotropic with respect a given nondegenerate symmetric form. It is determined by the fundamental weight $\omega_6$ of the Lie algebra with Dynkin diagram $\DD_6$.
\item $\DE_6(1)$, called Cartan variety, denotes the closed orbit of the projective representation of the adjoint group of the Lie algebra $\DE_6$ given by its weight $\omega_6$. 
\item $\DE_7(7)$ denotes the closed orbit of the projective representation of the adjoint group of the Lie algebra $\DE_7$ given by its fundamental weight $\omega_7$.
\item With this notation, smooth quadrics of dimension $k=2r-1,2r-2$ can be written as $\DB_k(1),\DD_k(1)$, respectively. Since in our discussion below the parity of the dimension of the quadrics is irrelevant, we will simply denote them by $Q^k$.
\end{itemize}
\end{notation}

\begin{theorem}\label{thm:bw3} \cite[Theorem~4.5]{RW}, \cite[Theorem 8.1]{WORS1} 
  Let $(X,L)$ be a polarized pair, where $X$ is a  smooth projective variety of
  dimension $n\geq 3$, endowed with a $\C^*$-action of bandwidth three. Assume that its sink and source are isolated points, and that the action is equalized. %, and denote by $\cY_i$ the union of the inner fixed point components of weight $i$, $i=1,2$. 
Then one of the following holds:
  \begin{itemize}
 \item [(1)] $X=\P(\cV^\vee)$, 
 with $\cV=\cO_{\P^1}(1)^{\oplus n-1}\oplus\cO_{\P^1}(3)$, or $\cO_{\P^1}(1)^{\oplus{n-2}}\oplus\cO_{\P^1}(2)^{\oplus 2}$, and $L= \cO_{\P(\cV^\vee)}(1)$. Moreover  $(Y_i,L_{|Y_i}) \simeq (\P^{n-2}, \cO_{\P^{n-2}}(1))$, $i=1,2$.
\item [(2)] $X=\P^1\times\Q^{n-1}$, $L=\cO(1,1)$, each $Y_i$ is the disjoint union of a smooth quadric $\Q^{n-3}$ and a point, and $L_{|\Q^{n-3}} \simeq \cO_{\Q^{n-3}}(1)$.
  \item [(3)]  $X$ is one of the following rational homogeneous varieties:
$$\DC_3(3),\,\,\, \DA_5(3),\,\,\,  \DD_6(6),\,\,\, \DE_7(7),$$
$L$ is the ample generator of $\Pic(X)$ and the varieties $Y_i$ are respectively
$$\P^2,\,\,\, \P^2 \times \P^2,\,\,\,  \DA_5(2),\,\,\, \DE_6(1).$$
The restriction of $L$ to $Y_i$ is the ample generator of $\Pic(Y_i)$, except in the case $Y_i\simeq \P^2$, in which $L_{|Y_i} \simeq \cO_{\P^2}(2)$.
  \end{itemize}
\end{theorem}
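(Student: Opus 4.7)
The plan is to exploit the rigidity forced by having isolated sink and source under an equalized action, reducing the classification to the study of a birational self-map of $\P^{n-1}$ and then invoking known results on special Cremona transformations.

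First, I would analyze the tangent representations at the sink and the source. Since $Y_-$ and $Y_+$ are points, $T_{Y_-}X = \cN^{+}(Y_-)$ and $T_{Y_+}X = \cN^{-}(Y_+)$. Equalization forces every weight on $T_{Y_-}X$ to be $+1$ (and every weight on $T_{Y_+}X$ to be $-1$), so the Bia{\l}ynicki--Birula decomposition identifies the extremal geometric quotients as
\[
\GX_1 \simeq \P(T_{Y_-}X^{\vee}) \simeq \P^{n-1}, \qquad \GX_r \simeq \P(T_{Y_+}X^{\vee}) \simeq \P^{n-1}.
\]
Bandwidth three forces the inner $L$-weights $\{a_1,a_2\}$ to be $\{1,2\}$, and by Lemma \ref{lem:AMFM} every $1$-orbit closure issuing from $Y_-$ is a smooth rational curve of $L$-degree in $\{1,2,3\}$. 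Tangent directions at $Y_-$ stratify into three loci according to whether the orbit reaches $Y_1$, $Y_2$, or $Y_+$, so the indeterminacy of the associated birational map $\psi \colon \P^{n-1} \dashrightarrow \P^{n-1}$ corresponds to the first two strata, which in turn are determined by the closed subvarieties $\overline{X^{-}(Y_i)}$ for $i=1,2$.

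The key technical step is to show that $\psi$ is a Cremona transformation defined by quadrics, with quadratic inverse and smooth base loci $B_{\pm}$. The quadratic degree reflects bandwidth three: the filtration $\{Y_-\} \subset \overline{X^{-}(Y_1)} \subset \overline{X^{-}(Y_1) \cup X^{-}(Y_2)} \subset X$ together with the weight structure on sections of $L^{\otimes m}$ forces the defining equations of $\psi$ to live in degree two, while smoothness of $B_{\pm}$ is inherited from smoothness of $X$ and of the fixed-point components $Y_i$ (Section \ref{sssec:fxd}). Once this is established, one applies the Ein--Shepherd-Barron classification of special quadro-quadric Cremona transformations: the smooth irreducible base loci are the four Severi varieties $v_2(\P^2)$, $\P^2\times\P^2$, $\DA_5(2)$ and $\DE_6(1)$, producing the homogeneous geometric realizations of case (3). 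Reducible base loci (a smooth quadric plus a disjoint point) yield case (2), while degenerate situations where a fixed component has codimension one give the projective bundles of case (1).

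Finally, for each possible $\psi$, one must reconstruct $X$ uniquely. This is achieved cell by cell: the closures of the Bia{\l}ynicki--Birula cells are forced by the bandwidth and equalization to be projective bundles over the $Y_i$, and compatibility of gluing then determines $X$. The hard part of the argument is the central step of showing that $\psi$ is quadro-quadric with smooth base loci; a secondary subtlety is treating the degenerate cases (1) and (2) uniformly with the smooth-Severi case (3) and checking that no further possibilities arise.
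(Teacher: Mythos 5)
Your overall strategy for case (3) --- identify the extremal geometric quotients with $\P^{n-1}$ via the tangent representations at the isolated sink and source, show the induced map $\psi$ is a special quadro-quadric Cremona transformation, invoke Ein--Shepherd-Barron, and then reconstruct $X$ --- is exactly the route of \cite[Theorem 8.1]{WORS1} that the paper sketches. The genuine gap is that the step you yourself flag as ``the key technical step'' is asserted rather than proved. Your justification that ``the filtration $\{Y_-\}\subset \overline{X^-(Y_1)}\subset\dots$ together with the weight structure on sections of $L^{\otimes m}$ forces the defining equations of $\psi$ to live in degree two'' is not an argument: nothing in the bandwidth-three hypothesis directly bounds the degree of the linear system defining $\psi$. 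The paper obtains bidegree $(2,2)$ only \emph{after} a nontrivial birational cobordism construction (Steps 1--3 of the sketch): one blows up $X$ along $Y_\pm$, then along the closures of the Bia{\l}ynicki--Birula cells $\overline{X_\flat^+(Y_1)}$, $\overline{X_\flat^-(Y_2)}$, and produces a second divisorial contraction to a variety $X'$ of criticality one; this shows that $\psi$ and $\psi^{-1}$ are each resolved by a \emph{single smooth blowup} along $Y_1$, $Y_2$ respectively, and only on that explicit resolution does an intersection-theoretic computation (\cite[Theorem~8.4]{WORS1}) yield bidegree $(2,2)$. The same construction is what makes $\psi$ ``special'' in the sense required by \cite[Theorem~2.6]{ESB}; and the irreducibility of the base locus, which that theorem also requires, comes from $\Pic(X)\simeq\Z$ (Step 0 of the sketch), a point you never address. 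Without the resolution step your appeal to Ein--Shepherd-Barron is not available.

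A secondary divergence: you propose to treat cases (1) and (2) as ``reducible'' and ``degenerate'' instances of the same Cremona analysis. The paper does not do this; those cases have $\rho(X)\geq 2$ and are classified in \cite[Theorem~3.5]{RW} by adjunction theory and the study of the adjoint morphisms of $(X,L)$. Your route could in principle be made to work --- the paper remarks that Pirio--Russo \cite[Proposition 5.6]{PiRu} extend the Ein--Shepherd-Barron list to reducible fundamental loci, recovering case (2), and that in case (1) the quadrics acquire a fixed hyperplane so $\psi$ is linear --- but you would first need a criterion separating the Picard-rank-one and higher-rank situations, and you cannot quote \cite{ESB} in the reducible case. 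Finally, your reconstruction step omits the existence half: after uniqueness of $X$ given $\psi$, one must still exhibit equalized bandwidth-three actions on $\DC_3(3)$, $\DA_5(3)$, $\DD_6(6)$, $\DE_7(7)$ realizing the four Severi-variety transformations in order to name the varieties in the conclusion.
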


The proof of the above result has been done adopting different techniques in \cite{RW} and \cite{WORS1}; we refer to \cite[Lemma 4.4]{RW} for the case $n=2$. More precisely, in \cite[Theorem~3.5]{RW} the authors relate the classical adjunction theory and Mori theory with a combinatorial description of $\C^*$-varieties; in this way types $(1)$ and $(2)$ of pairs $(X,L)$ in the above list are described in terms of their adjoint morphisms. 

The classification in the case in which the Picard number of $X$ is one (type $(3)$ in the statement) was later obtained in \cite[Theorem 8.1]{WORS1} by considering the birational map $\psi$ associated to the $\C^*$-action, %(see Section \ref{sec:defgeomreal}) \gotodo{Probably the same corollary missing before}, 
which, since $Y_\pm$ are isolated points, is a Cremona transformation $\psi:\P^{n-1}\dashrightarrow\P^{n-1}$ by Remark \ref{rem:birmap}. Let us briefly discuss the different steps that lead to the proof of the statement in this case. %The idea of this part of the proof is the following:

\begin{proof}[Sketch of the proof ($\Pic(X)\simeq \Z)$]\quad\par
%The proof is divided in several steps.

\noindent{\em Step 0.} Prove that, since $\Pic(X)\simeq \Z$, the two subvarieties $Y_1,Y_2$ are irreducible (cf. \cite[Lemma~2.8(2)]{WORS1}). \par\medskip

\noindent{\em Step 1.} Blow up $X$ along $Y_\pm$, to obtain a variety $X_\flat$ with a $\C^*$-action whose extremal fixed-point components are  projective spaces $\P^{n-1}_\pm$. Denote by $\cU_1$, $\cU_2$ the closure of the Bia{\l}ynicki-Birula cells $X_{\flat}^+(Y_1)$, $X_{\flat}^-(Y_2)$, respectively. \par\medskip

\noindent{\em Step 2.} Blow up $X_\flat$ along $\cU_1$, $\cU_2$, and let $\cU^\flat_1$, $\cU^\flat_2$ be the corresponding exceptional divisors. Denoting by $X'_{\flat}$ the resulting variety, show that there exists another divisorial contraction $X'_{\flat}\to X'$, whose restriction to the $\cU^\flat_i$'s is a nontrivial projective bundle different from the projection $\cU^\flat_i\to\cU_i$. See Figure \ref{fig:bw3} below. \par\medskip

\begin{figure}[h!!]
$$
\xymatrix@R=20pt{\includegraphics[width=5cm]{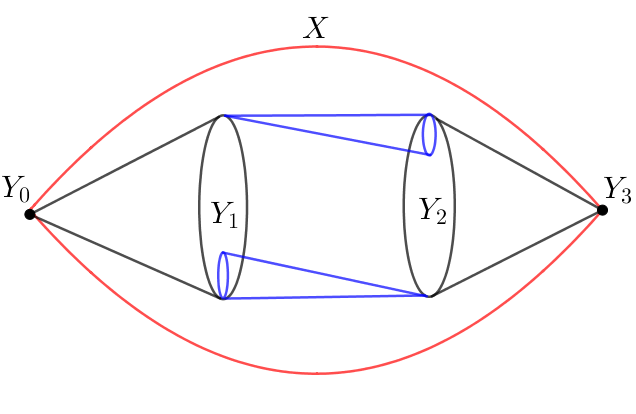}\ar@{-->}[d]&\includegraphics[width=4.5 cm]{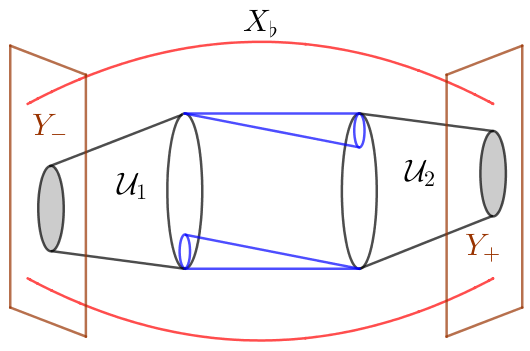}\ar[]!<0ex,10ex>;[l]!<0ex,10ex>_(.47){\varphi}\\
\includegraphics[width=5.4cm]{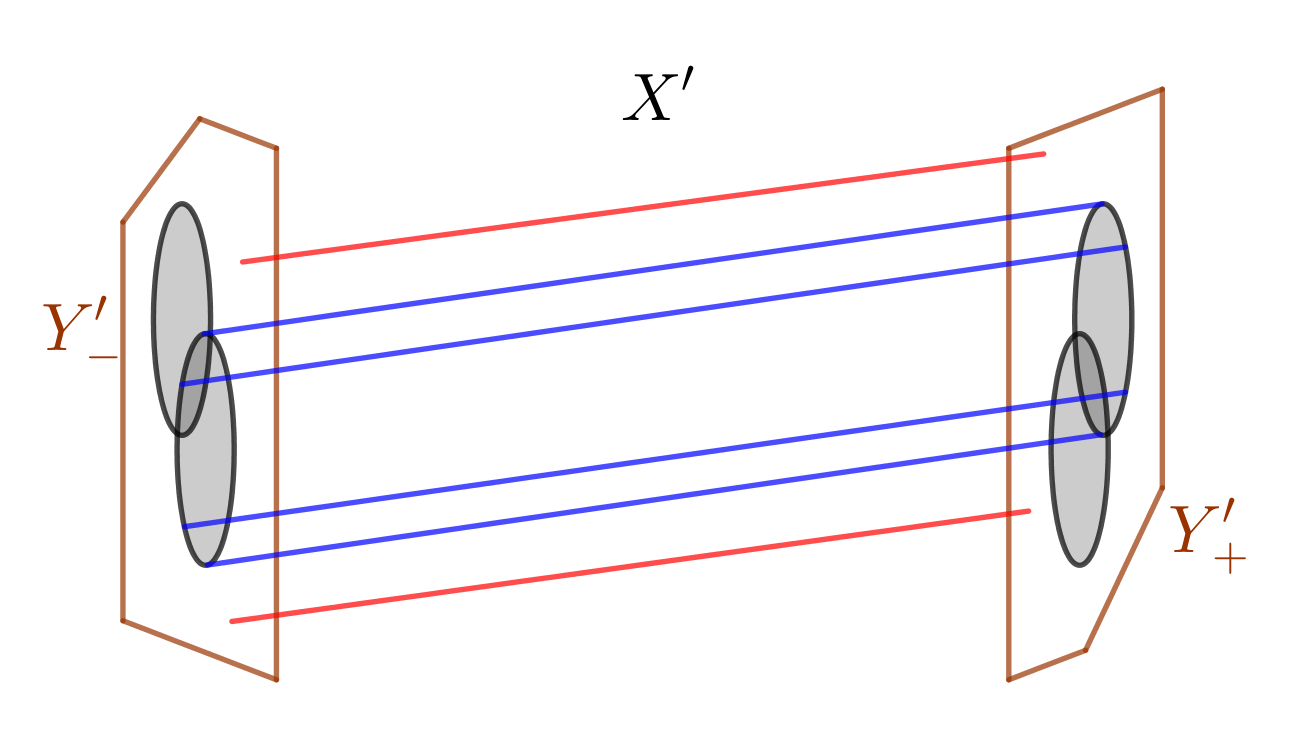}&\includegraphics[width=5cm]{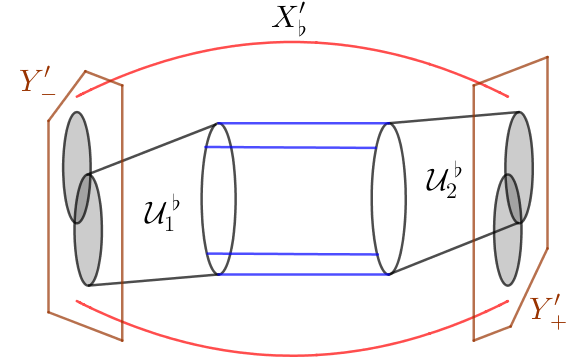}\ar[u]^(.85){\alpha}\ar[]!<0ex,10ex>;[l]!<0ex,10ex>_(.45){\varphi'}}
$$
\caption{Birational transformation of a bandwidth three variety with isolated extremal fixed points into a $\P^1$-bundle.
\label{fig:bw3}
}
\end{figure}

\noindent{\em Step 3.} The variety $X'$ inherits a $\C^*$-action  whose sink and source ($Y'_\pm$) are respectively isomorphic to the blowup of $\P^{n-1}_-$ along $\cU_1\cap \P^{n-1}_-\simeq Y_1$, and to the blowup of $\P^{n-1}_+$ along $\cU_2\cap \P^{n-1}_+\simeq Y_2$. Since the criticality of the induced $\C^*$-action on $X'$ is one (that is, it has no inner fixed point components), it follows that the associated birational map among $Y'_-$ and $Y'_+$ is an isomorphism. In other words, the map $\psi$ and its inverse can be resolved with a smooth blowup.
$$\xymatrix@R=8pt{&Y'_-\simeq Y'_+\ar[ld]_{\mbox{\tiny blowup}}\ar[rd]^{\mbox{\tiny blowup}}&\\\P^{n-1}_-&&\P^{n-1}_+}  
$$

\par\medskip

\noindent{\em Step 4.} An intersection theory computation (see the proof of \cite[Theorem~8.4]{WORS1}) shows that the bidegree of the birational map $\psi$ is $(2,2)$. \par\medskip 

\noindent{\em Step 5.}  Since $Y_1,Y_2$ (which are the indeterminacy loci of $\psi,\psi^{-1}$) are irreducible, we may apply \cite[Theorem~2.6]{ESB} to conclude that $\psi$ is one of the four quadro-quadric Cremona transformations defined by the linear system of quadrics containing a Severi variety:
$$v_2(\P^2)\subset \P^5,\quad \P^2\times\P^2\subset\P^8,\quad \DA_5(2)\subset\P^{14},\quad\DE_6(1)\subset \P^{26}.
$$

\noindent{\em Step 6.} Reverting the construction described in Steps 1, 2 and  3 one shows that $X$ is uniquely determined by $\psi$. We then conclude by proving that the varieties $$\DC_3(3),\,\,\, \DA_5(3),\,\,\,  \DD_6(6),\,\,\, \DE_7(7)$$ admit $\C^*$-actions whose birational transformations are the above ones. \end{proof}

\begin{remark}\label{rem:BW3}
The birational construction performed in Steps 1, 2 and 3 (represented in Figure \ref{fig:bw3}) does not require the irreducibility of $Y_1,Y_2$, and can be extended to arbitrary bandwidth (see  \cite[Theorem~3.1]{WORS3}). 
\end{remark}

Moreover, we may still claim that also in the cases of Picard number two of Theorem \ref{thm:bw3} the variety $X$ admitting an equalized $\C^*$-action  of bandwidth three with isolated extremal fixed points is uniquely determined by the subjacent special Cremona transformation $\psi$, and that $\psi$ has bidegree $(2,2)$. We then note that, by using completely different methods, Pirio and Russo (cf. \cite[Proposition 5.6]{PiRu}) have extended the classification of Ein and Shepherd-Barron to include the case in which the exceptional locus of $\psi$ is reducible. Their list includes Cremona transformations
$\P^{n-1}\dashrightarrow \P^{n-1}$ having as fundamental locus the union of a point and a smooth quadric $\Q^{n-3}$, which is precisely the birational transformation that we obtain in Theorem \ref{thm:bw3}(2). We finally note that in case (1) the subvariety $Y_1\subset\P^{n-1}_-$ %$Y_2\subset\P^{n-1}_+$ 
is a codimension one linear subspace, so the linear system of quadrics defining $\psi$ has a fixed hyperplane; dividing by its equation we obtain that $\psi$ is a linear isomorphism. 
In other words, Theorem \ref{thm:bw3} implies the following:

\begin{theorem}\label{thm:bw3}
Any quadro-quadric Cremona transformation with nonempty smooth fundamental locus admits a geometric realization, given by an equalized $\C^*$-action in one of the following varieties:
$$
\P^1\times Q^{n-1},\qquad \DC_3(3),\qquad \DA_5(3),\qquad  \DD_6(6),\qquad \DE_7(7).
$$
\end{theorem}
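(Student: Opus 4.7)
The plan is to combine the classification of quadro-quadric Cremona transformations having smooth nonempty fundamental locus with the previous Theorem~\ref{thm:bw3}. By Ein--Shepherd-Barron \cite{ESB} in the irreducible case, and by the Pirio--Russo extension \cite{PiRu} in the reducible case (both recalled just above the statement), the list of such $\psi$ consists of exactly five families: the four Severi-type transformations defined by the linear systems of quadrics through $v_2(\P^2)$, $\P^2\times\P^2$, $\DA_5(2)$ and $\DE_6(1)$, together with the Cremona transformation $\P^{n-1}\dashrightarrow\P^{n-1}$ whose fundamental locus is the disjoint union of a point and a smooth quadric $Q^{n-3}$.

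Second, I would match each of these five families with one of the five candidate varieties in the statement. For the reducible case I take $X=\P^1\times Q^{n-1}$, polarized by $\cO(1,1)$, endowed with the bandwidth-three equalized $\C^*$-action of Theorem~\ref{thm:bw3}(2); for the four Severi cases I take respectively $\DC_3(3)$, $\DA_5(3)$, $\DD_6(6)$, $\DE_7(7)$, each with the bandwidth-three equalized $\C^*$-action of Theorem~\ref{thm:bw3}(3). The existence of these actions on the rational homogeneous varieties is ensured by suitable one-parameter subgroups of the acting semisimple groups, as recorded in the references invoked in Step~6 of the sketch above.

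Finally, I would verify that each such pair constitutes a geometric realization of the corresponding $\psi$ in the sense of Definition~\ref{def:geomreal}. Since sink and source are isolated, Lemma~\ref{lem:blowup} and Remark~\ref{rem:birmap} identify both extremal geometric quotients with projective spaces $\P^{n-1}$, so the birational map associated to the $\C^*$-action is a Cremona transformation. Steps~4 and~5 of the sketch above show that it has bidegree $(2,2)$ with the prescribed smooth (possibly reducible) fundamental locus, and Ein--Shepherd-Barron together with Pirio--Russo guarantee that any such transformation is uniquely determined by its fundamental locus. Hence the associated Cremona transformation coincides with $\psi$, completing the construction. No substantive obstacle arises, since the statement is essentially a repackaging of Theorem~\ref{thm:bw3} in the language of geometric realizations; the main bookkeeping is to match the five Cremona transformations with the five candidate realizations correctly.
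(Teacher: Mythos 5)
Your proposal is correct and follows essentially the same route as the paper: the theorem is presented there as a direct consequence of the classification of bandwidth-three equalized actions with isolated extremal fixed points, combined with Ein--Shepherd-Barron for the irreducible fundamental locus and Pirio--Russo for the reducible (point plus quadric) case, matching the five families of transformations with the five listed varieties exactly as you do. The only cosmetic difference is that you do not explicitly note why case (1) of the classification (where the linear system has a fixed hyperplane, so the map is linear) is absent from the final list, but that is irrelevant for the existence direction being proved.
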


\section{Constructing geometric realizations}\label{sec:geomreal}

We have seen how one can associate to a B-type $\C^*$-action a birational transformation between the sink and the source; in this section we want to show that the converse holds for a certain class of birational transformations among smooth projective varieties, that we will introduce herein. %Fano manifolds of Picard number one. \gotodo{Not Picard number one, not Fano}

\begin{definition}\label{def:bispecial}
Let $(Y_-,H_-),(Y_+,H_+)$ two smooth polarized pairs of dimension $n$. A {\em bispecial transformation} between $(Y_-,H_-)$ and $(Y_+,H_+)$ is a birational map $\psi:Y_-\dashrightarrow Y_+$ that can be resolved via two blowups $\pi_\pm$ along smooth subvarieties $Z_{\pm}\subset Y_{\pm}$ such that, denoting by $E_{\pm}=\pi_{\pm}^{-1}(Z_{\pm})$ the corresponding exceptional divisors,  there exist integers $m_\pm> 0$ with:
\begin{equation}
\label{eq:cr1} \pi_+^*H_+\sim m_- \pi_-^*H_- -E_-, \qquad \pi_-^*H_- \sim m_+\pi_+^*H_+ -E_+
%\psi^{-1}_*H_+=m_-H_-,\qquad \psi_*H_-=m_+H_+.\label{eq:bisp}
\end{equation}
We call $(m_-,m_+)$ the {\em type} of the bispecial transformation.
\begin{equation}\label{eq:diagbi}\xymatrix@R=30pt@C=20pt{ & E_- \ar[ld] \ar@{^(->}[r]& \ar[ld]_{\pi_-}W \ar[rd]^{\pi_+} & E_+ \ar[rd] \ar@{_(->}[l]& \\
Z_- \ar@{^(->}[r]&Y_{-} \ar@{-->}@/^10pt/[rr]^{\psi}& &Y_{+} \ar@{-->}@/^10pt/[ll]^{\psi^{-1}}&\ar@{_(->}[l] Z_+ }
\end{equation}
\end{definition}

Note that, from the definition it follows also that:
\[E_-+m_-E_+ \sim (m_-m_+-1)\pi_+^*H_+ ,\quad E_+ +m_+E_- \sim (m_-m_+-1)\pi_-^*H_- .\]
In particular, since $E_\pm$ are effective, we have that $m_-m_+>1$.

\begin{remark}\label{rem:classes}
In the case in which $\Pic(Y_\pm)$ have rank one, condition (\ref{eq:cr1}) implies that $H_\pm$ are the ample generators of the Picard groups. In fact, denoting by $[e_\pm]$ the numerical class of a nontrivial fiber of $\pi_\pm$, and by $[\ell_{\pm}]=\pi_{\mp*}[e_\pm]$ the class of its image into $Y_\mp$, it follows that:
$$
[\ell_{\mp}]\cdot H_\mp=[e_\pm]\cdot\pi_\mp^*H_\mp= [e_\pm]\cdot (m_\pm\pi_\pm^*H_\pm -E_\pm)=1.
$$
\end{remark}

\begin{remark}\label{rem:special}
In the classical setting in which $Y_\pm$ is the projective space, a birational transformation with smooth and irreducible fundamental locus is called a {\em special Cremona transformation} (see \cite{ESB}). In the definition of bi\-spe\-cial transformation we do not require $Y_\pm$ to be projective spaces, nor the fundamental locus to be irreducible; however we do require the fundamental loci of both the transformation and its inverse to be smooth. %, a bispecial transformation with irreducible fundamental locus is a special Cremona transformation whose inverse is also special. 
\end{remark}

%We will show now how to construct smooth geometric realizations of bispecial birational transformations. L
Before stating the main result of this Section, let us start by describing the setting in which we will work on. 

\begin{setup} \label{set:setup}
Throughout this section $\psi$ will denote a bispecial  transformation of type $(m_-,m_+)$ between $(Y_-,H_-)$ and $(Y_+,H_+)$ as in Definition \ref{def:bispecial}. 
To shorten notation, we set $\mu_\pm:=m_\pm-1$, and %$r_\pm$ the codimensions of $Z_\pm \subset Y_\pm$ and by
 $L_\pm:=\pi_\pm^*H_\pm\in\Pic(W)$. 
\end{setup}

\begin{remark}\label{rem:equalities} 
Denoting by $F_\pm$ a fiber of $E_\pm \to Z_\pm$, we then have:
\begin{subequations}
\begin{align}
\label{eq:cr2}
(L_-)_{|F_-}&\sim\cO_{F_-},& (L_+)_{|F_+}&\sim\cO_{F_+},\\
\label{eq:cr3}
(L_-)_{|F_+}&\sim \cO_{F_+}(1), &(L_+)_{|F_-}&\sim\cO_{F_-}(1).
\end{align}
\end{subequations}
\end{remark}

\begin{theorem}\label{thm:gr1} Let $\psi$ be a bispecial transformation of type $(m_-,m_+)$ between $(Y_-,H_-)$ and $(Y_+,H_+)$ as in Setup \ref{set:setup}. There exists a unique B-type equalized smooth geometric realization $X$ of $\psi$ of criticality $3$ such that $Y_{\pm|Y_{\pm}}\simeq(1-m_\pm)H_\pm$.  
\end{theorem}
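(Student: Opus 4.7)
The plan is to construct the geometric realization $X$ explicitly from the bispecial data (the common resolution $W$ and the line bundles $L_\pm=\pi_\pm^*H_\pm$), and then verify the required properties. The key ingredient is the line bundle $\cL:=\cO_W(L_- - L_+)$ on $W$: the bispecial relations (\ref{eq:cr1}) together with the calculations of Remark \ref{rem:equalities} give $\cL_{|F_-}\simeq\cO_{F_-}(-1)$ and $\cL_{|F_+}\simeq\cO_{F_+}(1)$, which are precisely the numerical conditions needed to realize $\pi_\pm$ as blow-down contractions within a $\C^*$-equivariant construction. Note in particular that $\cL_{|F_-}$ and $\cL^{-1}_{|F_+}$ are the restrictions of the conormal bundles of the two sections of a natural $\P^1$-bundle built from $\cL$.

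Starting from the $\P^1$-bundle $\widehat{X}=\P_W(\cO_W\oplus\cL)\to W$, which carries a fiberwise $\C^*$-action fixing the two sections $\Sigma_0,\Sigma_\infty\simeq W$, I would apply a sequence of $\C^*$-equivariant birational modifications (blow-ups along $\C^*$-invariant subvarieties supported in $E_\pm\subset\Sigma_\pm$, followed by the contractions of the strict transforms of the extremal sections along $\pi_\pm$; see Remark \ref{rem:extraction}) to reach a smooth projective variety $X$ in which $Y_\pm$ appear as the sink and source (divisors) and two inner fixed components emerge from the exceptional loci of the preliminary blow-ups. A direct computation using the definition of $\cL$ yields $\cN_{Y_\pm|X}\simeq\cO_{Y_\pm}((1-m_\pm)H_\pm)$; equalization and criticality three follow from Lemma \ref{lem:AMFM} applied to the $1$-dimensional orbit closures joining the four fixed-point components; the B-type property is automatic, and the associated birational map is identified with $\psi$ via Remark \ref{rem:birmap} together with the identification of $\GX_1,\GX_3$ with $Y_-,Y_+$.

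For uniqueness, suppose $X'$ is another B-type equalized smooth geometric realization of $\psi$ of criticality three with $Y'_{\pm|Y'_\pm}\simeq(1-m_\pm)H_\pm$. The normal bundle condition determines formal neighborhoods of the sink and source of $X'$, while the middle geometric quotient $\GX_2(X')$ must coincide with the common resolution $W$ of $\psi$, as enforced by the criticality three constraint and the universality of $W$. Reversing the sequence of $\C^*$-equivariant birational modifications used above then yields an isomorphism $X'\simeq X$.

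\textbf{Main obstacle.} The central difficulty lies in identifying the correct sequence of $\C^*$-equivariant birational operations so that $X$ is smooth with exactly the required fixed-point structure. A direct contraction of $\Sigma_0,\Sigma_\infty$ in $\widehat{X}$ along $\pi_\pm$ introduces singularities in the target: already when $\mathrm{codim}_{Y_\pm}(Z_\pm)=2$, each fiber $F_\pm\simeq\P^1\subset\widehat{X}$ acquires normal bundle $\cO(-1)^{\oplus 2}$, producing nodal singularities after contraction, and in higher codimension the normal bundles are not of standard blow-down type. Preliminary blow-ups along appropriate $\C^*$-invariant subvarieties of $E_\pm$ are required to resolve this, and checking that they introduce exactly two inner fixed components of the right dimensions and weights (so that the criticality is three, not larger) is the main technical challenge of the construction.
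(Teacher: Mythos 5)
Your plan is essentially the paper's construction: the starting object $\widehat{X}=\P_W(\cO_W\oplus\cL)$ with $\cL=\cO_W(L_--L_+)$ is the same $\P^1$-bundle $P=\P_W(L_-\oplus L_+)$ used in the paper (projectivization is insensitive to the twist), the fiberwise action with fixed sections is the same, and you correctly identify the key numerical input $\cN_{E_\pm/P}|_{F_\pm}\simeq\cO(-1)^{\oplus 2}$ and the need for preliminary equivariant blow-ups before any contraction. To match the paper precisely: the centers are all of $E_\pm$ (not proper subvarieties thereof), and there is no separate ``contraction of the sections along $\pi_\pm$'' --- a single Nakano contraction of each exceptional divisor $E_\pm^\flat$ onto $B_\pm=\P_{Z_\pm}(\cO_{Z_\pm}\oplus\cO_{Z_\pm}(\mu_\pm H_\pm))$ in the other ruling simultaneously realizes $\pi_\pm$ on the strict transform of the section and creates the inner fixed component $S_\pm$; this is exactly the mechanism you flag as the main obstacle, and the paper resolves it the way you anticipate.

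Two points in your sketch are genuinely underdeveloped. First, projectivity of $X$: the Nakano contractibility criterion only produces a smooth \emph{proper} variety, and since the definition of geometric realization requires projectivity, this must be proved; the paper devotes a full step to it, computing $Y_\pm|_{Y_\pm}\simeq-\mu_\pm H_\pm$ and the degrees of $H,Y_\pm$ on every invariant curve class to exhibit an ample class $H+\epsilon_-Y_-+\epsilon_+Y_+$. Your plan never mentions this. Second, in the uniqueness argument, ``universality of $W$'' is not by itself a justification that $\GX_2(X')$ is $W$; the paper instead invokes \cite[Theorem~3.1]{WORS3} to blow up $X'$ along the closures $\ol{X'^+(Y'_1)},\ol{X'^-(Y'_2)}$ of the Bia{\l}ynicki-Birula cells of the inner components and contract again to a $\P^1$-bundle $P'\to W$ with no inner fixed points, and then uses the normal-bundle hypothesis $Y_{\pm}|_{Y_\pm}\simeq(1-m_\pm)H_\pm$ to compute $\cN_{W_\pm|P'}\simeq L_\pm-L_\mp$, which forces $P'\simeq P$ and hence $X'\simeq X$. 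You state the right conclusion but the reversal step needs this specific machinery; without it the claim that the construction can be ``reversed'' from an arbitrary $X'$ is unsupported.
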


\begin{proof}%[of Theorem \ref{thm:gr1}]
We will construct the variety $X$ as a birational modification of a $\P^1$-bundle on $W$, defined as
 $P:=\PP_W(L_-\oplus L_+)$, with natural projection $\pi:P \to W$ and tautological line bundle $H$. The proof will be divided in several steps. %we will start by introducing some notation regarding $P$.   

\medskip

%%%%%%%%%%%%%%%%%%%%%%%%%%%%%%%%%%%%%
\noindent{\em Step 1: Description of the variety $P$.} \par\medskip
%%%%%%%%%%%%%%%%%%%%%%%%%%%%%%%%%%%%%

We denote by $s_\pm:W \to P$ the two sections corresponding to the quotients $L_-\oplus L_+\to L_\pm$ and set $W_\pm:=s_\pm(W)$.
Then, by construction 
\begin{subequations}
\begin{align}\label{eq:PicP}
W_- &\sim H- \pi^*L_+, & W_+& \sim H- \pi^*L_-, \\
H|_{W_-} &\sim L_-,& H|_{W_+} &\sim L_+,\label{eq:restrH}\\
\intertext{so the normal bundles of $W_\pm$ in $P$ are given by}
%\cN_{W_-|P}\simeq\cO_{W_-}(W_-|_{W_-}) 
W_-|_{W_-}&\sim L_- - L_+, & W_+|_{W_+}&\sim L_+ -L_-. \label{eq:normW}
\end{align}
\end{subequations}

Abusing notation, we denote by $E_{\pm}\subset W_\pm$ the images  $s_{\pm}(E_{\pm})\subset W_\pm$ of the exceptional loci of the projections $\pi_{\pm}$; note that we are not assuming $E_{\pm}$ to be irreducible. %(but we know that they have the same number of irreducible components, equal to $\rho_W - \rho_{Y_\pm}$). %\ertodo{maybe we may observe the sentence written in parentheses at the end of setup \ref{set:setup} and delete it from here?}\lstodo{It is significant here} 
Moreover we denote by %$e_\pm \subset W_\pm$ a line in a fiber of $\pi_\pm$, and by 
$\gamma$ a fiber of $\pi$. %For further reference, let us also denote by $f_\pm \subset W_\mp$ a fiber of $\pi_\pm$. 

Note that there exists a natural equalized B-type $\C^*$-action on $P$, whose sink and source are, respectively, $W_-$ and $ W_+$. This is defined as the action of $\C^*$ on $P$ induced by the group of $(\cO_W$-module) automorphisms of $L_-\oplus L_+$ defined as the multiplication by $t\in\C^*$ in $L_-$ and as the identity in $L_+$.  

\medskip

%%%%%%%%%%%%%%%%%%%%%%%%%%%%%%%%%%%%%
\noindent{\em Step 2: Constructing two $\C^*$-equivariant small modifications $\varphi_\pm:P
 \dashrightarrow P_\pm$, % and $\varphi_+:P \dashrightarrow P_+$,
with indeterminacy locus $E_\pm$.}\par\medskip
%%%%%%%%%%%%%%%%%%%%%%%%%%%%%%%%%%%%%

Each modification $\varphi_\pm$ will be obtained by composing the smooth blowup $p_\pm$ of $P$ along $E_\pm$ with a different smooth blowdown of the exceptional locus $E_\pm^\flat$ of $p_\pm$. 
%Since the centers of $p_-$ and $p_+$ will be disjoint, $\varphi_+$ extends via $\varphi_-$ to a modification $\varphi:P_-\dashrightarrow X$, whose image will be the geometric realization of $\psi_-$.
We will show now how to construct $\varphi_-:P \dashrightarrow P_-$; the construction of $\varphi_+:P \dashrightarrow P_+$ is analogous.

Let $p_-:{P^\flat_-} \to P$ be the blowup of $P$ along $E_-\subset W_-$, with exceptional divisor $E_-^\flat:=\P(\cN^\vee)$, $\cN:=\cN_{E_-/P}$.
Since $E_-$ is contained in the fixed-point component $W_-$, the action of $\C^*$ can be lifted to $\cN$, providing the weight decomposition: 
\begin{equation}\label{eq:decompnormal}
\cN =\cN^0 \oplus \cN^- \simeq \cN_{E_-/W_-}\oplus (\cN_{W_-/P})|_{E_-}.
\end{equation}
Moreover, Equations (\ref{eq:decompnormal}, \ref{eq:normW}, \ref{eq:cr1}) imply that:
\begin{equation}\label{eq:pullback}\begin{aligned}
(\cN^0)^\vee\otimes\cO_{E_-}(E_-)&=\cO_{E_-}=\pi_-^*\cO_{Z_-}\\[2pt]
(\cN^-)^\vee\otimes\cO_{E_-}(E_-)&=\cO_{E_-}(\mu_-L_-)=\pi_-^*\cO_{Z_-}(\mu_-H_-)
\end{aligned}
\end{equation}
Then, denoting $B_-:%=\P_{Z_-}({\pi_-}_*(\cO_{E_-}(E_-)\otimes \cN^\vee))
=\P_{Z_-}(\cO_{Z_-}\oplus \cO_{Z_-}(\mu_- H_-))$ we have a Cartesian square:
%\begin{equation}\label{eq:Cartesian}
$$
\xymatrix@C=12mm@R=10mm{E_-^\flat\ar[r]^{\P^1}\ar[d]&E_-\ar[d]\\
B_-\ar[r]^{\P^1}&Z_-}
$$
%\end{equation}
whose maps are horizontal projective bundles.%, where 

On the other hand, since $E_-\subset W_-\subset P$ consists of $\C^*$-fixed points, it follows that the $\C^*$-action extends to $P_-^\flat$ so that $p_-$ is equivariant. Note that, since $E_-\subset W_-$ has codimension one, the strict transform via $p_-$ of $W_-$, which is the sink of the $\C^*$-action on $P_-^\flat$, is isomorphic to $W_-$; we continue to denote it by $W_-\subset P_-^\flat$. %Moreover the restriction of the action to $E_-^\flat$ has two fixed point components, corresponding to the summands of the direct sum  (\ref{eq:decompnormal}).

The action leaves $E_-^\flat$ invariant, and the weight decomposition (\ref{eq:decompnormal}) shows that $E_-^\flat$ contains two fixed-point components; they are the images of the two sections $\sigma_\pm:E_-\to E_-^\flat$ given by the two projections of $\cN^\vee$ onto its summands, $(\cN^0)^\vee,(\cN^-)^\vee$, respectively. The first one is contained in the sink of $P_-^\flat$, and the second is an inner fixed-point component of the action. %corresponding to  sections $\sigma_\pm:E_- \to E_-^\flat$: one, 
%corresponding to the sections of $E_-^\flat\to E_-$ given by quotients of $\cN^\vee$: the first $\cN^\vee\to (\cN^0)^\vee$, that is contained in the sink of $P_-^\flat$ , and one,  corresponding to the quotient $\cN^\vee\to (\cN^-)^\vee$, that is an inner fixed point component in $P_-^\flat$. 
The projection $E_-^\flat\to B_-$ is $\C^*$-equivariant, when one considers the (fiberwise) action of $\C^*$ on $B_-=\P_{Z_-}(\cO_{Z_-}\oplus \cO_{Z_-}(\mu_- H_-))$ whose fixed-point components are the images of the two sections corresponding to the summands of the subjacent vector bundle. We still denote these section by $\sigma_-$, $\sigma_+$ so that they fit into two commutative diagrams: 
$$
\xymatrix@C=12mm@R=10mm{E_-^\flat\ar[d]&E_-\ar[d]\ar[l]_{\sigma_\pm}\\
B_-&Z_-\ar[l]_{\sigma_\pm}}
$$
Note that any fiber $F_-^\flat$ of the projective bundle $E_-^\flat \to B_-$
is a section of $E_-^\flat \to E_-$ over its image $F_-$, which is a fiber of  $E_-\to Z_-$. By formulae (\ref{eq:cr3}, \ref{eq:normW}) we see that 
\begin{equation} \cN|_{F_-}\simeq \cO_{F_-}(-1) \oplus \cO_{F_-}(-1), \label{eq:norm}\end{equation} 
and in particular one may check that:
 $$
\cO_{E_-^\flat}(E_-^\flat)|_{F_-^\flat}\simeq \cO_{\P(\cO_{F_-}(1)^{\oplus 2})}(-1)_{|F_-^\flat}\simeq \cO_{F_-^\flat}(-1).$$
By Nakano Contractibility Criterion (\cite[Theorem~3.2.8]{BS}) there exists a proper map $q_-:P^\flat_- \to P_-$ which is the blowup of a smooth proper variety $P_-$ along a subvariety isomorphic to $B_-$. %, which is a $\P^1$-bundle over $Z_-\subset Y_-$. 
Note that $q_-|_{W_-}$ contracts $E_-^\flat\cap W_-$ to a variety isomorphic to $Z_-$; in other words, it is the blowdown map $W_- \to Y_-$.

Finally, since $\C^*$ is connected, %the nontrivial fibers of $q_-$ are $\C^*$-invariant, 
$q_-$ is proper and satisfies that $q_{-*}\cO_{P^\flat_-}=\cO_{P_-}$, it follows by Blanchard's lemma (see \cite[Theorem~7.2.1]{Br17}, \cite[I.1]{Blan59}) that the $\C^*$-action on $P^\flat_-$ descends to an action on $P_-$ so that $q_-$ is $\C^*$-equivariant. 

% then the $\C^*$-action on $P_-^\flat$ clearly descends to $P_-$ so that $q_-$ is $\C^*$-equivariant.

The construction of $P_-^\flat$ and $P_-$ has been illustrated in Figure \ref{fig:flip}.

\begin{figure}[!h!] 
\includegraphics[width=12cm]{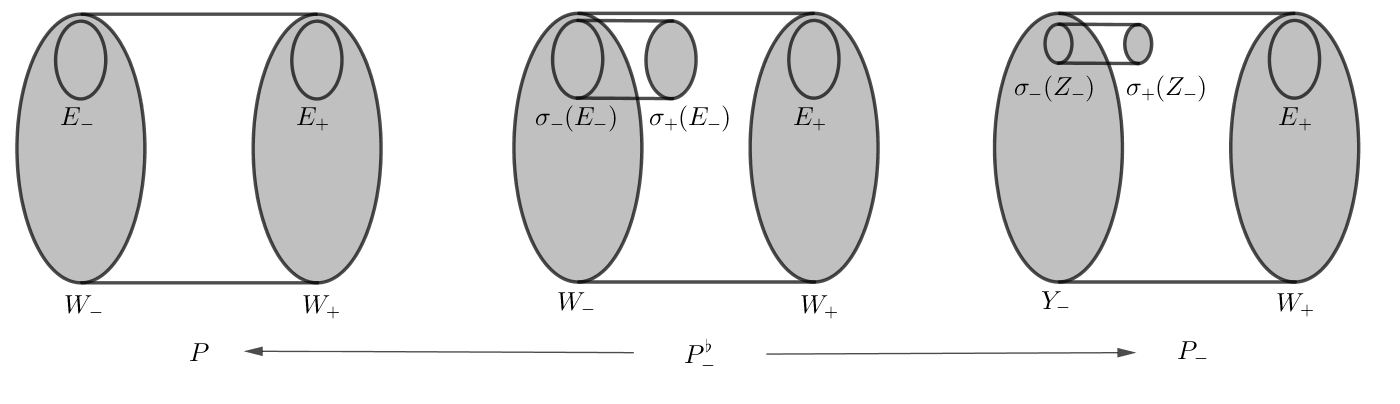}
\caption{The $\C^*$-equivariant small transformation $\varphi:P\dashrightarrow P_-$.\label{fig:flip}}
\end{figure}
\medskip

%%%%%%%%%%%%%%%%%%%%%%%%%%%%%%%%%%%%%
\noindent{\em Step 3: Description of the varieties $P_\pm$ and their orbit graphs.}\par\medskip 
%%%%%%%%%%%%%%%%%%%%%%%%%%%%%%%%%%%%%

By construction, the fixed-point components of $P_-$ are:
\begin{itemize}
\item the image via $q_-$ of the sink of $P_-^\flat$, which is isomorphic to $Y_-$;
\item the image via $q_-$ of $\sigma_+(E_-)$, which is isomorphic to $\sigma_+(Z_-)$;
\item the source $q_-(W_+)\simeq W_+$.
\end{itemize}
We denote them respectively by $Y_-,S_-,W_+\subset P_-$.
Denoting by $E_-^i$ the irreducible components of $E_-$, by $\delta_-^i$  the image in $P_-$ of a line in a fiber of $(E_-^i)^\flat \to E_-^i$, by $\gamma_-^i$ the strict transform of a fiber of $\pi$ meeting $E_-^i$, and by $\gamma$ the strict transform of a fiber of $\pi$ not meeting $E_-$, the action on $P_-$ has the following orbit graph:

\begin{equation}\label{eq:grP-}
\xygraph{
!{<0cm,0cm>;<3cm,0cm>:<0cm,1cm>::}
!{(0,0) }*+{\bullet_{Y_-}}="0"
!{(1,0) }*+{\bullet_{S_-^i}}="1"
!{(2,0) }*+{\bullet_{W_+}}="2"
"0"-@/^/@[blue]"1" _{\delta_-^i}
"0"-@/_0.5cm/@[blue]"2" _{\gamma}
"1"-@/^/@[blue]"2" _{\gamma_-^i}
} 
\end{equation}

In the small 
modification $P_+$, obtained blowing up $P$ along $E_+$ and blowing down the resulting variety $P_+^\flat$ to $P_+$, the induced action on $P_+$ has three fixed-point components $W_-$, $S_+$, $Y_+$ and orbit graph (for every component $E_+^j\subset E_j$):

\begin{equation}\label{eq:grP+}
\xygraph{
!{<0cm,0cm>;<3cm,0cm>:<0cm,1cm>::}
!{(0,0) }*+{\bullet_{W_-}}="0"
!{(1,0) }*+{\bullet_{S_+^j}}="1"
!{(2,0) }*+{\bullet_{Y_+}}="2"
"0"-@/^/@[blue]"1" _{\gamma_+^j}
"0"-@/_0.5cm/@[blue]"2" _{\gamma}
"1"-@/^/@[blue]"2" _{\delta_+^j}
} 
\end{equation}

\medskip

%%%%%%%%%%%%%%%%%%%%%%%%%%%%%%%%%%%%%
\noindent{\em Step 4: Merging $\varphi_\pm$ into a $\C^*$-equivariant small modification $\varphi:P\dashrightarrow X$ with indeterminacy locus $E_-\cup E_+$.} \par\medskip
%%%%%%%%%%%%%%%%%%%%%%%%%%%%%%%%%%%%%

The centers of the two birational transformations $\varphi_-:P \dashrightarrow P_-$ and $\varphi_+:P \dashrightarrow P_+$ described above are disjoint; therefore they can be performed simultaneously to define a small modification $\varphi:P\dashrightarrow X$. More concretely, we may write $P$ as the union of the two $\C^*$-invariant open subsets $P\setminus W_+$, $P\setminus W_-$; then the two birational transformations
$$
\varphi_-:P\setminus W_+ \dashrightarrow P_-\setminus W_+,\qquad \varphi_+:P\setminus W_- \dashrightarrow P_+\setminus W_-
$$
can be glued together into a map $\varphi: P\dashrightarrow X$ onto a smooth variety, which is obviously a small modification. Furthermore, this construction also shows that, since $\varphi_{\pm}$ are $\C^*$-equivariant, the action of $\C^*$ extends via $\varphi$ to $X$. By construction it has % criticality three
% \ertodo{with respect to which line bundle?} \gotodo{$A$. Now the criticality is at the end}
fixed-point components $Y_-, S_-, S_+$ and $Y_+$. 

Denoting by  $\gamma_\pm$ the strict transform of a fiber of $\pi$ meeting $E_\pm$ but not $E_\mp$ and by $\varepsilon^{ij}$ the strict transform of a fiber of $\pi$ meeting both $E_-^i$ and $E_+^j$  we have the following orbit graph:

\begin{equation}\label{eq:grX}
\xygraph{
!{<0cm,0cm>;<3cm,0cm>:<0cm,1cm>::}
!{(0,0) }*+{\bullet_{Y_-}}="0"
!{(1,0) }*+{\bullet_{S_-^i}}="1"
!{(2,0) }*+{\bullet_{S_+^j}}="2"
!{(3,0)}*+{\bullet_{Y_+}}="3"
"0"-@/^/@[blue]"1" _{\delta_-^i}
"2"-@/^/@[blue]"3" _{\delta_+^j}
"0"-@/_0.7cm/@[blue]"2" _{\gamma_+^j}
"1"-@/_0.7cm/@[blue]"3" _{\gamma_-^i}
"1"-@/^/@[blue]"2" _{\varepsilon^{ij}}
"0"-@/^0.8cm/@[blue]"3" ^{\gamma}
} 
\end{equation}
We are not assuming that $\varepsilon^{ij}$ exists for every $i,j$, but we claim that it does for some $i,j$. In fact,
by formulae (\ref{eq:cr1}, \ref{eq:cr2}, \ref{eq:cr3})  we have $E_-|_{F_+} \simeq \cO_{F_+}(m_-)$, so $E_- \cap E_+ \not = \emptyset$.
\par
\medskip
%%%%%%%%%%%%%%%%%%%%%%%%%%%%%%%%%%%%%
\noindent{\em Step 5: Projectivity of the variety $X$.}\par\medskip
%%%%%%%%%%%%%%%%%%%%%%%%%%%%%%%%%%%%%

Let us now show that $X$ is a projective variety, by finding an ample  line bundle on $X$. In view of  \cite[Lemma 2.2]{RW} and the Kleiman's criterion, a line bundle on $X$ is ample if and only if it has positive degree on the
classes of closures of orbits and on the classes of curves
contained in the fixed components. 
We first claim that the normal bundle of  $Y_\pm$ in $X$ is given by
\begin{equation}Y_\pm|_{Y_\pm}\sim -\mu_\pm H_\pm. \label{normYpm}\end{equation}

Let us prove the claim for $Y_-$, noting first that the line bundle associated to $Y_-|_{Y_-}$ equals the normal bundle $\cN_{Y_-|P_-}$ of $Y_-$ in $P_-$. %Note that the normal bundle of $Y_-$ in $X$ is isomorphic to the normal bundle of $Y_-$ in $P_-$, so we will compute this one.
In order to compute this bundle we first study $\cN_{W_-|P^\flat_-}$. This bundle fits into the following exact sequence of sheaves over $W_-$:
$$
0\to \cN_{W_-|P^\flat_-}\lra \cN_{W_-|P}\lra T_{E_-^\flat|E_-}(E_-^\flat)|_{\sigma_-(E_-)}\to 0
$$
Observing that $\cN_{W_-|P}\simeq L_--L_+$ by Equation (\ref{eq:normW}), and that
$$
T_{E_-^\flat|E_-}(E_-^\flat)|_{\sigma_-(E_-)}\simeq \cO_{E_-}(\mu_-L_--E_-)\simeq \cO_{E_-}(L_--L_+)
$$
by Equations (\ref{eq:decompnormal}), (\ref{eq:pullback}), (\ref{eq:cr1}),
 we get that 
$$
\cN_{W_-|P^\flat_-}\simeq \cO_{W_-}(L_--L_+-E_-)\simeq \cO_{W_-}(L_--L_+-m_-L_-+L_+)\simeq \cO_{W_-}(-\mu_-L_-).
$$
Since $\cO_{W_-}(-\mu_-L_-)\simeq \pi_-^*(-\mu_-H_-)$ we finally conclude that $$\cN_{Y_-|P_-}\simeq \cO_{Y_-}(-\mu_-H_-).$$

Since $B_-
=\P_{Z_-}(\cO_{Z_-}\oplus \cO_{Z_-}(\mu_- H_-))$, every curve in $B_\pm$ is numerically equivalent to a linear combination with nonnegative coefficients of the class of a curve in  $Z_\pm \subset Y_\pm$ and the class of the closure of a $1$-dimensional orbit joining $Y_-$ and  $S_-$. In particular, in order to show that a line bundle in $X$ is ample %$A \in \Pic(X)$ is ample
it is enough to test positivity  on the
classes of closures of orbits and on the classes of curves
contained in $Y_\pm$. By abuse of notation, we will denote again by $H$ the strict transform of the line bundle $H$ on $P$. Its intersection number with $\gamma, \gamma_\pm^i, \varepsilon^{ij}$, which are strict transforms of fibers of $P \to W$ is one, while $H \cdot \delta_\pm^i=0$.

By using Remark \ref{rem:Btypeequalized} we compute the intersection numbers of closures of $1$-dimensional  orbits with $Y_\pm$. Summing up, we have: 

\begin{center}
\begin{tabular}{C|C|C|C|C|C|C!}
& \delta_-^i& \gamma_-^i&   \delta_+^j& \gamma_+^j& \varepsilon^{ij} &\gamma  \\
\hline\hline
H  &0&1 & 0& 1& 1&1\\
\hline
Y_-&1& 0&0&1 & 0 &1\\
\hline
Y_+& 0& 1&1 &0&0&1\\
\hline
\end{tabular}
\end{center}
In particular any linear combination of $Y_\pm$ and $H$ with positive coefficients has positive degree on all the closures of  $1$-dimensional orbits.
Moreover
\[H|_{Y_\pm} \simeq H_\pm, \qquad Y_\pm|_{Y_\pm}\simeq -\mu_\pm H_\pm, \qquad Y_\pm|_{Y_\mp} \simeq \cO_{Y_\mp},\]
so any divisor $A$ in $X$ that is an integral multiple of a $\Q$-divisor of the form:
$$
H+\epsilon_-Y_-+\epsilon_+Y_+,\quad 0<\epsilon_-,\epsilon_+\ll 1,
$$
is ample on $X$. It is straightforward to check that the criticality of $(X,A)$ is three; this is also the criticality of $X$, by Lemma \ref{lem:AMFM}, because there exists a chain of closures of $1$-dimensional orbits joining the sink and the source of the form $\delta_-^i+ \varepsilon^{ij} + \delta^j_+$. \par
\medskip

%%%%%%%%%%%%%%%%%%%%%%%%%%%%%%%%%%%%%
\noindent{\em Step 6: Uniqueness of the geometric realization $X$.}\par\medskip  %\gotodo{In the statement there is a polarized pair. Here there is no mention of the line bundle}
%%%%%%%%%%%%%%%%%%%%%%%%%%%%%%%%%%%%%

We will show that the geometric realization $X$ of $\psi$ that we have constructed above is uniquely determined by the property of being of B-type, equalized and having criticality three. The proof of this Step will follow the line of argumentation of \cite[Section~8]{WORS1}, and \cite[Section~3]{WORS3}.  

Let us assume that $X'$,  together with an action of $\C^*$, is a geometric realization of $\psi$ satisfying the requirements of the theorem. 
Then we may consider the inner fixed-point sets $Y'_1,Y'_2$, that (see \cite[Lemma~8.5]{WORS1}) are isomorphic to $\Exc(\psi)$ and $\Exc(\psi^{-1})$. Then \cite[Theorem~3.1]{WORS3} tells us that the blowup of $X'$ along $\ol{X'^+(Y'_1)}$ and $\ol{X'^-(Y'_2)}$ admits a second divisorial contraction onto a smooth variety $P'$ with a B-type action with no inner fixed-point components, and whose sink and source are the blowup of $Y_-$ along $\Exc(\psi)$, and the blowup of $Y_+$ along $\Exc(\psi^{-1})$. 

It follows that these two blowups are isomorphic to the resolution $W$ of the bispecial transformation $\psi$, and that $P'$ is a $\P^1$-bundle over $W$. The sink and the source of the induced $\C^*$-action on $P'$, that we denote by $W'_\pm$, are two sections of $P'\to W$. 
Arguing as in the first part of Step 5, one may  compute the normal bundle of $Y_\pm$ in $X'$ out of the normal bundle of $W_\pm$ in $P'$; since the latter is, by hypothesis, $Y_{\pm|Y_{\pm}}\simeq (1-m_\pm)H_\pm$, one may check that $\cN_{W_\pm|P'}\simeq L_\pm-L_\mp$. It then follows that $P'\simeq\P_W(L_-\oplus L_+)\simeq P$, and so also $X'\simeq X$. 
\end{proof}

\section{Birational geometry of the geometric realization}\label{sec:cones}

In this section we describe some properties of the Nef and Movable cones of our geometric realization $X$ of a bispecial transformation $\psi$ between $(Y_-,H_-)$ and $(Y_+,H_+)$. Then we will apply this information to study contractions of $X$ in two particular situations (Sections \ref{ssec:qquadric}, \ref{ssec:21}). Our arguments provide information about a linear section of $\Nef(X)$ (precisely, the section generated by $Y_\pm$ and the strict transform of the tautological bundle $H$). For simplicity, we will %assume  that the Picard number of $Y_\pm$ is equal to one.  More concretely, we will 
work under the following assumptions: %in the following situation:

\begin{setup}\label{set:cones}
We consider two smooth projective %Fano 
varieties $Y_\pm$ of dimension $n$ such that $\Pic(Y_\pm) \simeq \Z$, %\langle H_\pm \rangle$, 
and we denote by $H_\pm\in \Pic(Y_\pm)$ the ample generators of these groups. %\gotodo{The ample generators, I suppose} 
We consider a bispecial transformation $\psi$ between $(Y_-,H_-)$ and $(Y_+,H_+)$ of type $(m_-,m_+)$, and the corresponding B-type equalized geometric realization of criticality three $X$ satisfying that $Y_{\pm|Y_{\pm}}\simeq(1-m_\pm)H_\pm$ provided by Theorem \ref{thm:gr1}; as usual we set $\mu_{\pm}:=m_\pm-1$. We will denote by $[e_\pm]$  the classes of lines in the fibers of $E_\pm \to Z_\pm$, and set $[\ell_\pm]:=\pi_{\pm*}([e_\mp])$, which are classes of curves of $H_\pm$-degree $1$ in $Y_\pm$ (see Remark \ref{rem:classes}). We will consider them as classes in $X$ via the immersions of $Y_\pm$ as the sink and source of $X$, respectively.  
We will freely use the notation for auxiliary varieties, curves and divisors introduced in Section \ref{sec:geomreal}, particularly the classes of $\C^*$-invariant curves described in the graphs (\ref{eq:grP-}), (\ref{eq:grP+}) and (\ref{eq:grX}).
Furthermore, we will assume that the fundamental locus of $\psi$ is irreducible, so that, in particular, the inner components $S_\pm\subset X$ are irreducible (and so we may avoid the superscripts in the notation of the $\C^*$-invariant curves $\delta^i_\pm,\gamma^i_\pm,\varepsilon^{ij}$). 
\end{setup}

\begin{proposition}\label{prop:cones} Let $(Y_\pm,H_\pm)$, $\psi$ and $X$ be as in Setup  \ref{set:cones}, % be a bispecial transformation between with irreducible fundamental locus as in Setup \ref{set:setup}. Let $X$ be the geometric realization of $\psi$ provided by Theorem \ref{thm:gr1}, 
and let $P, P_-, P_+$ be the varieties constructed in the proof of Theorem \ref{thm:gr1}. Then the Nef cone and the Mori cone of these varieties are as described in Table \ref{tab:cones}. 

\renewcommand{\arraystretch}{1.55}
\begin{table}[h!!]
\begin{center}
\begin{tabular}{|C|C|C|}
\hline
\text{Variety} &\text{Nef cone}& \text{Mori cone} \\
\hline\hline
P  &\langle H, H-W_-, H-W_+\rangle& \langle [e_-], [e_+], [\gamma] \rangle\\
\hline
P_-, \mu_- >0& \langle H, H-W_+, H+\tfrac{Y_-}{\mu_-}, H+\tfrac{Y_-}{\mu_-} -W_+ \rangle& \langle [\delta_-], [e_+], [\ell_-],[\gamma_-] \rangle\\
\hline
P_-, \mu_- = 0&  \langle H, H-W_+, Y_- \rangle&\langle [e_+], [\delta_-], [\gamma_-] \rangle \\
\hline
P_+, \mu_+ >0& \langle H, H-W_-, H+\tfrac{Y_+}{\mu_+}, H+\tfrac{Y_+}{\mu_+} -W_- \rangle& \langle [\delta_+], [e_-], [\ell_+],[\gamma_+] \rangle\\
\hline
P_+, \mu_+ = 0&  \langle H, H-W_-, Y_+ \rangle&\langle  [e_-], [\delta_+], [\gamma_+] \rangle \\
\hline
X, \mu_-\mu_+>0 &\langle H, H+\tfrac{Y_-}{\mu_-}, H+\tfrac{Y_+}{\mu_+}, H+\tfrac{Y_-}{\mu_-} +\tfrac{Y_+}{\mu_+}\rangle &\langle [\delta_-], [\ell_+], [\ell_-],[\delta_+] \rangle \\
\hline
X, \mu_+=0 &  \langle H, H+\tfrac{Y_-}{\mu_-}, Y_+ \rangle&\langle [\delta_-], [\ell_-],[\delta_+] \rangle \\
\hline
\end{tabular}
\end{center}
\caption{Nef and Mori cones of $P,P_\pm,X$. \label{tab:cones}}
\end{table}
\renewcommand{\arraystretch}{1}
\end{proposition}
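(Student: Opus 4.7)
The plan is to verify each row of Table \ref{tab:cones} by a common procedure: identify the extremal rays of the Mori cone by exhibiting either an extremal contraction or a curve annihilated by enough nef divisors, and recover the Nef cone via Kleiman duality after computing the intersection pairing in a suitable basis of $\Pic\otimes\Q$. Every variety in the table has Picard number three: $\rho(P)=\rho(W)+1=3$, since $W$ is the blowup of $Y_-$ (of Picard rank one) along $Z_-$, and both the small modifications $\varphi_\pm$ and the joint modification $\varphi$ preserve the Picard number. Convenient bases are $\{H,W_-,W_+\}$ on $P$, $\{H,Y_-,W_+\}$ on $P_-$ (symmetrically for $P_+$), and $\{H,Y_-,Y_+\}$ on $X$; intersection numbers are converted into pairings in $W$ or $Y_\pm$ by means of (\ref{eq:PicP}), (\ref{eq:restrH}), (\ref{eq:normW}) and the bispeciality relations (\ref{eq:cr1}).

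For $P$, the three extremal contractions are the bundle projection $\pi:P\to W$ (collapsing $[\gamma]$) together with the two divisorial contractions obtained by pulling back the blowdowns $\pi_\pm:W\to Y_\pm$ (collapsing $[e_\pm]$). These produce the simplicial Mori cone $\langle[\gamma],[e_-],[e_+]\rangle$. Pairing the three proposed nef divisors $H$, $\pi^*L_+=H-W_-$, $\pi^*L_-=H-W_+$ against these three curves yields the identity matrix (using $L_\pm\cdot e_\mp=1$, a consequence of (\ref{eq:cr1}) and $E_\pm\cdot e_\pm=-1$), and Kleiman duality produces the listed Nef cone.

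For $P_-$ (and symmetrically $P_+$), the small modification $\varphi_-$ flips $[e_-]$ into the new ray $[\delta_-]$, while the sink $Y_-$ now appears as a fixed-point divisor with $\cN_{Y_-|P_-}\simeq -\mu_-H_-$, computed in Step~5 of the proof of Theorem \ref{thm:gr1}. When $\mu_->0$ this negativity promotes a line $[\ell_-]$ in $Y_-$ to an extremal ray, alongside the class $[\gamma_-]$ arising from the strict transforms of $\pi$-fibers meeting $E_-$. The relation $[\gamma]=[\delta_-]+[\gamma_-]$ in $\Nu(P_-)$, verified by pairing against $\{H,Y_-,W_+\}$, shows that $[\gamma]$ is not extremal, so $\ME(P_-)$ is the non-simplicial four-ray cone $\langle[\delta_-],[e_+],[\ell_-],[\gamma_-]\rangle$. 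One then checks that each of the four proposed nef generators vanishes on exactly two consecutive rays in the cyclic adjacency $\delta_-\,{-}\,e_+\,{-}\,\ell_-\,{-}\,\gamma_-$ and is strictly positive on the others. The degenerate case $\mu_-=0$ merges $[\ell_-]$ with $[\delta_-]$, producing the simplicial cone listed. For $X$, both flips contribute: $[\delta_\pm]$ and $[\ell_\pm]$ are all extremal, forming the non-simplicial Mori cone $\langle[\delta_-],[\ell_+],[\ell_-],[\delta_+]\rangle$, with an analogous simplicial collapse when $\mu_+=0$.

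The main obstacle is proving that no further extremal ray appears, that is, that the candidate curves $[\gamma]$, $[\gamma_\pm]$, $[\varepsilon^{ij}]$ all decompose as non-negative combinations of the listed generators. Explicitly in $X$, pairing with the basis $\{H,Y_-,Y_+\}$ and using $Y_\pm|_{Y_\pm}\simeq -\mu_\pm H_\pm$ and $Y_\pm|_{Y_\mp}\simeq\cO_{Y_\mp}$ from the proof of Theorem \ref{thm:gr1} gives, for example, $[\gamma_-]=\mu_-[\delta_-]+[\delta_+]+[\ell_-]$ and $2[\varepsilon^{ij}]=\mu_-[\delta_-]+\mu_+[\delta_+]+[\ell_-]+[\ell_+]$, with the remaining classes decomposing analogously. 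A secondary subtlety is the case analysis for $\mu_\pm=0$: the corresponding normal bundle vanishes, two Mori rays coincide, and the cone collapses to simplicial form, which must be verified separately.
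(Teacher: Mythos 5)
Your strategy is essentially the paper's own (the published proof is a two\--sentence sketch: check that the listed divisors are nef by restricting to the sink, the source and the closures of one\--dimensional orbits, then check that each vanishes on a two\--dimensional face of $\NE$); your version is the dual formulation of the same computation, and the explicit identities you record, e.g.\ $[\gamma_-]=\mu_-[\delta_-]+[\delta_+]+[\ell_-]$ and $2[\varepsilon^{ij}]=\mu_-[\delta_-]+\mu_+[\delta_+]+[\ell_-]+[\ell_+]$, as well as the cyclic adjacency of the four rays of $\NE(P_\pm)$, are correct. There is, however, one genuine gap in the logic. Your argument establishes $\NE\subseteq\langle [C_j]\rangle$ via $\Nef\supseteq\langle D_i\rangle$ and duality, so everything hinges on the divisors $D_i$ actually being nef; but classes such as $H+\tfrac{Y_-}{\mu_-}$ are not nef for any a priori reason, and you only test them against the invariant curves and $\ell_\pm$. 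The missing ingredient is \cite[Lemma~2.2]{RW} (already invoked in Step 5 of the proof of Theorem \ref{thm:gr1}): for a variety with a $\C^*$-action, nefness can be tested on closures of orbits and on curves contained in the fixed-point components. Once this is in place you must also handle curves inside the \emph{inner} components $S_\pm$, which your proposal omits entirely; as in Step 5, every curve in $B_\pm=\P_{Z_\pm}(\cO_{Z_\pm}\oplus\cO_{Z_\pm}(\mu_\pm H_\pm))$ is a nonnegative combination of a curve in $Z_\pm\subset Y_\pm$ (hence a multiple of $[\ell_\pm]$, since $Y_\pm$ has Picard number one) and of $[\delta_\pm]$, so these contribute nothing new.

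Separately, your description of the degenerate cases is incorrect, even though the cones you arrive at are the right ones. When $\mu_-=0$ the classes $[\ell_-]$ and $[\delta_-]$ do not ``merge'': they cannot be proportional, since $H\cdot\ell_-=1$ while $H\cdot\delta_-=0$. What actually happens is that $[\ell_-]$ ceases to be extremal because it falls into a face spanned by the remaining generators: on $P_-$ one checks, by pairing against the basis $\{H,Y_-,W_+\}$, that $[\ell_-]=[\gamma_-]+[e_+]$, and on $X$ with $\mu_+=0$ that $[\ell_+]=\mu_-[\delta_-]+[\ell_-]$. So in the degenerate rows it is $[\ell_-]$ (resp.\ $[\ell_+]$) that drops out of the list of generators, not a coincidence of two rays; with this correction, and with the reduction to invariant curves supplied, the argument closes.
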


\begin{center}
\begin{figure}[!h]\caption{Cones}\label{fig:cones}\setlength\intextsep{0pt}
\subfloat[][The Mori cone $\NE(P)$]{
%\caption{The Mori cone $\NE(P)$.}\label{fig:coneP}
\begin{tikzpicture}[scale=1]
\draw (0,3)--(3,3)--(3,0)--(0,3);
\fill[black!90!white] (3,0) circle (0.8mm);
\fill[black!90!white] (3,3) circle (0.8mm);
\fill[black!90!white] (0,3) circle (0.8mm);
\node[anchor=north] at (-3,1) {};
\node[anchor=north] at (6,1) {};
\node[anchor=north] at (3,-0.1) {$e_-$};
\node[anchor=east] at (-0.1,3) {$e_+$};
\node[anchor=west] at (3,3) {$\gamma$};
\node[anchor=south west] at (0.7,0.7) {$H$};
\node[anchor=west] at (3,1.5) {$\pi^*L_-$};
\node[anchor=south] at (1.5,3) {$\pi^*L_+$};
\node[anchor=south] at (2.5,4) {};
\end{tikzpicture}}
\vspace{-0.5cm}
\subfloat[The Mori cone $\NE(P_+)$]{%caption{The Mori cone $\NE(P_-)$.}\label{fig:coneP-}

%\begin{figure}[h!]
\begin{tikzpicture}[scale=0.8]
\draw (2,0) --  (4,2)--(2,4)--(0,2)--(2,0);
\node[anchor=east] at (0,2) {$\ell_+$};
\fill[black!90!white] (2,0) circle (0.8mm);
\fill[black!90!white] (2,4) circle (0.8mm);
\fill[black!90!white] (4,2) circle (0.8mm);
\fill[black!90!white] (0,2) circle (0.8mm);
\node[anchor=west] at (4,2) {$\delta_+$};
\node[anchor=north] at (2,0) {$e_-$};
\node[anchor=south] at (2,4) {$\gamma_+$};
\node[anchor=south east] at (1,3) {$H+\tfrac{Y_+}{\mu_+} -W_-$};
\node[anchor=south west] at (3,3) {$H -W_-$};
\node[anchor=north west] at (3,1) {$H_{~} $};
\node[anchor=north east] at (1,1) {$H +\tfrac{Y_+}{\mu_+}$};

\draw (10,0)--(10,4)--(12,2)--(10,0);
\fill[black!90!white] (10,0) circle (0.8mm);
\fill[black!90!white] (12,2) circle (0.8mm);
\fill[black!90!white] (10,4) circle (0.8mm);
\fill[black!90!white] (10,2) circle (0.8mm);
\node[anchor=west] at (10,2) {$\ell_+$};
\node[anchor=south] at (10,4) {$\gamma_+$};
\node[anchor=north] at (10,0) {$e_-$};
\node[anchor=west] at (12,2) {$\delta_+$};
\node[anchor=south west] at (11,3) {$H -W_-$};
\node[anchor=north west] at (11,1) {$H_{~} $};
\node[anchor=east] at (10,2) {$Y_+$};
\node[anchor=south] at (6,4) {$(\mu_+>0)$};
\node[anchor=south] at (14,4) {$(\mu_+=0)$};
\node[anchor=south] at (2,6) {};
\end{tikzpicture}}
\vspace{-0.5cm}
\subfloat[The Mori cone $\NE(X)$, with the dual of $\Mov(X)$ shaded]{
\begin{tikzpicture}[scale=1]
\draw[fill=black!15]    (0,2)--(2,0) -- (2.66,2)-- (2.5,2.5)--(2,2.66) -- (0,2);
\draw[line width=0.0mm, fill=black!15]    (6,2)--(10,2) -- (9.666,2.666) -- (9,3) -- (6,2);
\draw (2,0)--(4,2)--(2,4)--(0,2)--(2,0);
\draw (2,0)--(2,4);
\draw (0,2)--(4,2);
\draw[line width=0.2mm] (4,2)--(1,3);
\draw[line width=0.2mm] (2,4)--(3,1);
\node[anchor=north] at (2,0) {$\ell_+$};
\fill[black!90!white] (2,0) circle (0.8mm);
\fill[black!90!white] (2,4) circle (0.8mm);
\fill[black!90!white] (4,2) circle (0.8mm);
\fill[black!90!white] (0,2) circle (0.8mm);
\fill[black!90!white] (2,2) circle (0.8mm);
\fill[black!90!white] (2,2.66) circle (0.8mm);
\fill[black!90!white] (2.66,2) circle (0.8mm);
\fill[black!90!white] (2.5,2.5) circle (0.8mm);
\node[anchor=south east] at (1,0.4) {$H + \tfrac{Y_-}{\mu_-} + \tfrac{Y_+}{\mu_+}$};
\node[anchor=north east] at (0.8,3.6) {$H +  \tfrac{Y_-}{\mu_-}$};
\node[anchor=north west] at (3.2,3.5) {$H$};
\node[anchor=south west] at (3.1,0.4) {$H + \tfrac{Y_+}{\mu_+}$};
\node[anchor=south] at (2,4) {$\delta_+$};
\node[anchor=south west] at (2.5,2.5) {$\gamma$};
\node[anchor=east] at (0,2) {$\ell_-$};
\node[anchor=north east] at (2,2) {$\varepsilon$};
\node[anchor=north east] at (2,3.3) {$\gamma_{-}$};
\node[anchor=north west] at (2.8,2) {$\gamma_{+}$};
\node[anchor=west] at (4.1,2) {$\delta_-$};
\draw (6,2)--(11,2)--(9,4)--(6,2);
\fill[black!90!white] (6,2) circle (0.8mm);
\fill[black!90!white] (11,2) circle (0.8mm);
\fill[black!90!white] (9,4) circle (0.8mm);
\fill[black!90!white] (9,2) circle (0.8mm);
\fill[black!90!white] (10,2) circle (0.8mm);
\fill[black!90!white] (9.666,2.666) circle (0.8mm);
\node[anchor=south west] at (9.56,2.66) {$\gamma$};
\draw (10,2)--(9,4);
%\draw[line width=0.2mm] (9,2)--(9,4);
\fill[black!90!white] (9,3.02) circle (0.8mm);
\node[anchor=south east] at (9.1,3.15) {$\gamma_{-}$};
\draw[line width=0.2mm] (11,2)--(8.15,3.45);
\draw[line width=0.2mm] (11,2)--(9,4);
\draw[line width=0.2mm] (9,2)--(9,4);
\node[anchor=north west] at (10.2,3.5) {$H$};
\node[anchor=north west] at (8.5,1.5) {$Y_+$};
\node[anchor=north east] at (7.2,3.6) {$H +  \tfrac{Y_-}{\mu_-}$};
\node[anchor=north] at (9,2) {$\ell_+$};
\node[anchor=north] at (10,2) {$\gamma_+$};
\node[anchor=south] at (9,4) {$\delta_+$};
\node[anchor=east] at (6,2) {$\ell_-$};
\node[anchor=west] at (11.1,2) {$\delta_-$};
\node[anchor=south] at (4,4) {$(\mu_+>0)$};
\node[anchor=south] at (11,4) {$(\mu_+=0)$};
\node[anchor=south] at (2,5.8) {};
\end{tikzpicture}}
\end{figure}
\end{center}

\begin{proof}[Sketch of Proof]
The idea of the proof is the same in all cases: we check that the listed line bundles are nef, by checking that they are nef when restricted to the sink and the source and on the closures of $1$-dimensional orbits. Then we check that each of them has degree zero on a two-dimensional face of the Mori cone.
\end{proof}

For the reader's convenience we have represented the Mori cones of the varieties $P$, $P_+$ ($P_-$ is analogous) and $X$ described in the statement in Figure  \ref{fig:cones}. %, in which the nef divisors corresponding to the two-dimensional faces appear. 

 We can now use the information provided by Proposition \ref{prop:cones} to prove:

\begin{proposition}\label{prop:movx} Let $Y_\pm$, $\psi$ and $X$ be as in Setup \ref{set:cones}. If  $\mu_-\mu_+ >0$ then
\begin{multline*}\Mov(X) = \langle 
H-Y_-, H-Y_+,H+\tfrac{Y_-}{\mu_-}-Y_+,H+ \tfrac{Y_-}{\mu_-}+\tfrac{Y_+}{\mu_+},H- Y_-+\tfrac{Y_+}{\mu_+}\rangle.
\end{multline*}
If  $\mu_->0, \mu_+ =0$ then
\[\Mov(X) = \langle H-Y_-, H-Y_+,H+\tfrac{Y_-}{\mu_-}-Y_+,Y_+\rangle.
\]
\end{proposition}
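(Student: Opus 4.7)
The approach is to express $\Mov(X)$ as the closure of the union of the pullbacks of the Nef cones of the small $\Q$-factorial modifications (SQMs) of $X$. The proof of Theorem~\ref{thm:gr1} produced three smooth projective varieties birational to $X$ via small modifications: the $\P^1$-bundle $P=\P_W(L_-\oplus L_+)$ and the two varieties $P_\pm$ obtained by blowing up $E_\pm\subset P$ and contracting the exceptional divisor differently. As these small modifications are isomorphisms in codimension one, the natural identifications of $N^1$-groups via strict transforms send $[W_\pm]_P$ to $[Y_\pm]_X$ (the sink and source divisors of the respective B-type actions coincide as divisor classes) and send the tautological class $H$ on $P$ to its strict transform on $X$.

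Using these identifications and Proposition~\ref{prop:cones}, the four Nef cones become subcones of $\langle H,Y_-,Y_+\rangle\subseteq N^1(X)$. When $\mu_-\mu_+>0$:
\begin{align*}
\Nef(P) &= \langle H,\,H-Y_-,\,H-Y_+\rangle,\\
\Nef(P_-) &= \langle H,\,H-Y_+,\,H+\tfrac{Y_-}{\mu_-},\,H+\tfrac{Y_-}{\mu_-}-Y_+\rangle,\\
\Nef(P_+) &= \langle H,\,H-Y_-,\,H+\tfrac{Y_+}{\mu_+},\,H+\tfrac{Y_+}{\mu_+}-Y_-\rangle,\\
\Nef(X) &= \langle H,\,H+\tfrac{Y_-}{\mu_-},\,H+\tfrac{Y_+}{\mu_+},\,H+\tfrac{Y_-}{\mu_-}+\tfrac{Y_+}{\mu_+}\rangle.
\end{align*}
A direct check shows that adjacent cones share common two-dimensional faces, for instance $\Nef(P)\cap\Nef(P_-)=\langle H,\,H-Y_+\rangle$ and $\Nef(P_-)\cap\Nef(X)=\langle H,\,H+\tfrac{Y_-}{\mu_-}\rangle$; their union is therefore a single convex three-dimensional cone whose extremal rays are exactly the five classes listed in the statement. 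For the degenerate case $\mu_+=0$, Table~\ref{tab:cones} shows that the Nef cones of $P_+$ and $X$ acquire $Y_+$ as an extremal ray in place of the rays involving $\tfrac{Y_+}{\mu_+}$, and the same tiling argument yields the claimed four-generator description.

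The main obstacle will be to rule out additional SQMs of $X$ contributing extra extremal rays to $\Mov(X)$ inside the section $\langle H,Y_-,Y_+\rangle$. I would address this by showing that every boundary ray of the cone produced above is realized by a prime divisor on some SQM, while any class strictly outside must acquire a fixed component. Concretely, the $\C^*$-invariant prime divisors in this subspace are $Y_\pm$, the strict transform of $H$, and the strict transforms of $W_\pm$ (which agree with $Y_\pm$ under our identification); each boundary ray of the tiling is visibly one of these on an appropriate SQM, whereas a class lying past, say, the ray $H-Y_-$ admits a positive multiple of $Y_-$ as a fixed summand, by inspection of the restriction of $H$ to the sink computed in Step~5 of the proof of Theorem~\ref{thm:gr1}. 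An equivalent route is to recognize the tiling as the chamber decomposition of $\Mov(X)$ produced by variation of GIT quotients for the $\C^*$-action (Section~\ref{ssec:quot}), which by its very construction exhausts the relevant SQMs.
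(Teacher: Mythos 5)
Your lower bound coincides with the paper's: both arguments tile the candidate cone by the Nef cones of $P$, $P_-$, $P_+$ and $X$ taken from Proposition \ref{prop:cones} (under the strict-transform identification of $N^1$, which does send $W_\pm$ to $Y_\pm$), and both conclude that each chamber lies in $\Mov(X)$ because these modifications are isomorphisms in codimension one. Your identification of the chambers, of their common faces, and of the resulting five (resp.\ four) extremal rays agrees with Figure \ref{fig:mov}.

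The gap is in the opposite inclusion. The paper obtains $\Mov(X)\subseteq\cC$ in one line: $\Mov(X)$ is contained in the dual of the cone generated by the five curve classes $[\gamma],[\gamma_-],[\ell_-],[\ell_+],[\gamma_+]$, because each of these moves in a family sweeping out a subset of codimension at most one (so a movable divisor must be nonnegative on it), and the intersection table from Step 5 of the proof of Theorem \ref{thm:gr1} shows that this dual cone is exactly the cone in the statement. Your two proposed substitutes do not close this. The fixed-component argument is the right idea --- it is the dual formulation of the moving-curve bound --- but it is only gestured at, and the one instance you give is off: the facets of the cone through the ray $H-Y_-$ are cut out by $[\gamma]$ and $[\gamma_+]$, whose sweeping loci are $X$ itself and the strict transform of $\pi^{-1}(E_+)$, not $Y_-$; the divisor $Y_-$ obstructs movability only across the facet dual to $[\ell_-]$, the one spanned by $H+\tfrac{Y_-}{\mu_-}-Y_+$ and $H+\tfrac{Y_-}{\mu_-}+\tfrac{Y_+}{\mu_+}$. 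The alternative appeal to VGIT presupposes your opening assertion that $\Mov(X)$ equals the closure of the union of the Nef cones of the small modifications; this holds for Mori dream spaces but is not established here, and in general one only has the inclusion of the union into $\Mov(X)$ --- which is precisely the direction you already proved. To complete the argument, replace both sketches by the explicit duality against $[\gamma],[\gamma_\pm],[\ell_\pm]$.
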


\begin{proof}
If  $\mu_-\mu_+ >0$ then the Movable cone of $X$ is contained in 
$$
\cC =\langle [\gamma],[\gamma_-],[\ell_-],[\ell_+],[\gamma_+] \rangle^\vee,$$
since all  these curves move in codimension at least one. Computing intersection numbers we see that $\cC$ is  the cone  appearing in the statement. 
To prove that $\cC \subset \Mov(X)$  it is enough to check that $\cC$ has a chamber decomposition in which each chamber corresponds to the Nef cone of one of the birational modifications of $X$ described in Section \ref{sec:geomreal}. In fact these birational modifications are isomorphisms in codimension one, hence their Nef cones are contained in $\Mov(X)$.
We use the description of the cones given in Proposition \ref{prop:cones} to identify the chambers, as shown in Figure \ref{fig:mov} which represents a cross-section of $\Mov(X)$.
\begin{figure}[h]
\caption{The Movable cone $\Mov(X)$}\label{fig:mov}
\begin{center}
\scalebox{0.68}{
\begin{tikzpicture}[scale=0.5]
\fill[black!90!white] (0,0) circle (1mm); % B
\node[anchor=north] at (0,0) {$Y_+$};
\fill[black!90!white] (16,0) circle (1mm); % A
\node[anchor=north] at (16,0) {$Y_-$};
%\fill[black!90!white] (8,23.05) circle (0.8mm); %C
\fill[black!90!white] (6.02,17.29) circle (1mm); %D
\node[anchor=south] at (6.02,17.29) {$H-Y_-$};
\fill[black!90!white] (10.02,17.29) circle (1mm); % E
\node[anchor=south] at (10.02,17.29) {$H-Y_+$};
\fill[black!90!white] (2.8,8.05) circle (1mm); % F
\node[anchor=east] at (2.9,8.3) {$H\!-\! Y_-\!+\!\tfrac{Y_+}{\mu_+}$};
\fill[black!90!white] (13.93,5.98) circle (1mm);% G
\node[anchor=west] at (14,6.5) {$H\!+\!\tfrac{Y_-}{\mu_-}\!-\!Y_+$};
\fill[black!90!white] (4.18,7.21) circle (1mm);% H
\node[anchor=west] at (3.1,5.7) {$\,\,H+\tfrac{Y_+}{\mu_+}$};
\fill[black!90!white] (9.39,4.03) circle (1mm);% I
\node[anchor=north] at (9.1,3.3) {$H+ \tfrac{Y_-}{\mu_-}+\tfrac{Y_+}{\mu_+}$};
\fill[black!90!white] (8.01,13.83) circle (1mm);% K
\node[anchor=north] at (8.01,13) {$H$};
\fill[black!90!white] (12.82,5.5) circle (1mm);% J
\node[anchor=east] at (13.5,4.2) {$H+\tfrac{Y_-}{\mu_-}$};

\draw (6.02,17.29) --  (12.82,5.5);
\draw (10.02,17.29) --  (4.18,7.21);
\draw (2.8,8.05) -- (6,17.29) -- (10,17.29) -- (13.93,6)-- (9.39,4.03)-- (2.8,8.05);
\draw[dashed] (2.8,8.05) --  (0,0);
\draw[dashed] (4.18,7.21) --  (0,0);
\draw[dashed] (9.39,4.03) --  (0,0);
\draw[dashed] (9.39,4.03) --  (16,0);
\draw[dashed] (13.93,5.98) --  (16,0);
\draw[dashed] (12.82,5.5) --  (16,0);
\draw[dashed] (16,0) --  (0,0); %la righa orizzontale in basso

\node at (8,9) {$\Nef(X)$};
\node at (9.9,13.5) {$\Nef(P_-)$};
\node at (6.3,13.5) {$\Nef(P_+)$};
\node at (8,16.5) {$\,\Nef(P)$};
\node[anchor=south] at (1,16) {$\mu_+>0$};
\end{tikzpicture}

\hspace{-1 cm}
\begin{tikzpicture}[scale=0.5]
\fill[black!90!white] (0,0) circle (1mm); % B
\node[anchor=north] at (0,0) {$Y_+$};
\fill[black!90!white] (16,0) circle (1mm); % A
\node[anchor=north] at (16,0) {$Y_-$};
%\fill[black!90!white] (8,23.05) circle (0.8mm); %C
\fill[black!90!white] (6.02,17.29) circle (1mm); %D
\node[anchor=south] at (6.02,17.29) {$H-Y_-$};
\fill[black!90!white] (10.02,17.29) circle (1mm); % E
\node[anchor=south] at (10.02,17.29) {$H-Y_+$};
\fill[black!90!white] (13.93,5.98) circle (1mm);% G
\node[anchor=west] at (14,6) {$H\!+\!\tfrac{Y_-}{\mu_-}\!-\!Y_+$};
\fill[black!90!white] (8.01,13.83) circle (1mm);% K
\node[anchor=north] at (8.01,13) {$H$};
\fill[black!90!white] (12.82,5.5) circle (1mm);% J
\node[anchor=east] at (13.4,4) {$H+\tfrac{Y_-}{\mu_-}$};

\draw (6.02,17.29) --  (12.82,5.5);
\draw (10.02,17.29) --  (4.18,7.21);
\draw (0,0) -- (6,17.29) -- (10,17.29) -- (13.93,6)-- (9.39,4.03)-- (0,0);
\draw[dashed] (2.8,8.05) --  (0,0);
\draw (4.18,7.21) --  (0,0);
\draw[dashed] (13.93,5.98) --  (16,0);
\draw[dashed] (12.82,5.5) --  (16,0);
\draw[dashed] (16,0) --  (0,0); %la righa orizzontale in basso

\node at (8,9) {$\Nef(X)$};
\node at (9.9,13.5) {$\Nef(P_-)$};
\node at (6.3,13.5) {$\Nef(P_+)$};
\node at (8,16.5) {$\,\Nef(P)$};
\node[anchor=south] at (1.5,16) {$\mu_+=0$};
\end{tikzpicture}}
\end{center}
\end{figure}
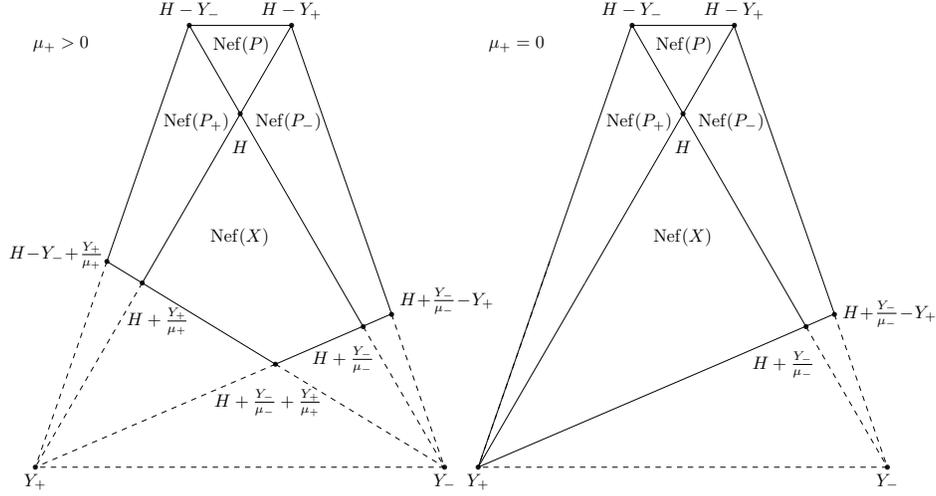

% the square at the bottom left is the nef cone of $X$, the two rectangular regions are the nef cones of $P_-$ and $P_+$ and the upper right triangle is the nef cone of $P$. 
With similar arguments we can prove the statement also in the case $\mu_+>0, \mu_+=0$, where the chamber decomposition is shown in the right part of Figure \ref{fig:mov};
again we use Proposition \ref{prop:cones} to identify the chambers.
\end{proof}

\subsection{Bispecial transformations with $\mu_-\mu_+\neq 0$}
%Quadro-quadric bispecial Cremona transformations}
\label{ssec:qquadric}

In this Section we will consider a bispecial transformation $\psi$ between $(Y_-,H_-)$, $(Y_+,H_+)$ as in Setup \ref{set:cones}, and assume that $\mu_-\mu_+ >0$. We will consider the geometric realization $X$ of $\psi$ constructed in Theorem \ref{thm:gr1}, and show that it admits a $\C^*$-equivariant contraction onto a variety $X^\sharp$ of Picard number one, which is another geometric realization of $\psi$. We will finally study the smoothness of $X^\sharp$.

Along this section we will use the following notation:
$$
\mu:=\lcm(\mu_-,\mu_+),\qquad H':=\mu H+ \tfrac{\mu}{\mu_-}Y_-+\tfrac{\mu}{\mu_+}Y_+.
$$
%Let $X$ be the geometric realization of a bispecial birational transformation $\psi:Y_- \dashrightarrow Y_+$ with \gotodo{picard number one?}irreducible fundamental locus, such that $\mu_-\mu_+ >0$; set  $\mu:=\lcm(\mu_-,\mu+)$ and consider the divisor 
%\[H':=\mu H+ \tfrac{\mu}{\mu_-}Y_-+\tfrac{\mu}{\mu_+}Y_+\]
By the description of the Mori cone $\NE(X)$ given in Proposition \ref{prop:cones}, $H'$ is nef and defines a $2$-dimensional face $\sigma$ of $\NE(X)$, whose extremal rays are generated by the numerical classes $[\ell_\pm]$. % (see Setup \ref{set:cones}).
We start by proving the following statement:

\begin{lemma}\label{lem:Kneg} The face $\sigma$ is contained in the $K_X$-negative part of $\NE(X)$.
\end{lemma}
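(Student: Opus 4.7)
By Proposition~\ref{prop:cones}, the face $\sigma$ is the $2$-dimensional face of $\NE(X)$ spanned by $[\ell_-]$ and $[\ell_+]$. Since $K_X$-negativity of a face is equivalent to $K_X$-negativity on a set of generating rays, the plan is to prove directly that $K_X\cdot [\ell_\pm]<0$. Both classes sit inside the sink and source $Y_\pm\subset X$, so the natural first move is adjunction: combining $K_X|_{Y_\pm}\sim K_{Y_\pm}-Y_\pm|_{Y_\pm}$ with the normal bundle formula $Y_\pm|_{Y_\pm}\simeq -\mu_\pm H_\pm$ from Setup~\ref{set:cones} gives
\[
K_X\cdot [\ell_\pm]\;=\;K_{Y_\pm}\cdot[\ell_\pm]+\mu_\pm\,H_\pm\cdot[\ell_\pm]\;=\;K_{Y_\pm}\cdot[\ell_\pm]+\mu_\pm,
\]
using $H_\pm\cdot [\ell_\pm]=1$ from Remark~\ref{rem:classes}.

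The core computation is to pin down $K_{Y_\pm}$. Since $\Pic(Y_\pm)\simeq\Z H_\pm$, we may write $K_{Y_\pm}=-r_\pm H_\pm$ for some $r_\pm\in\Z$. To determine $r_\pm$ I would apply the canonical bundle formula of a smooth blowup to each of $\pi_\pm\colon W\to Y_\pm$: setting $d_\pm:=\mathrm{codim}_{Y_\pm}(Z_\pm)-1$, which is the dimension of a fiber of $E_\pm\to Z_\pm$ and satisfies $d_\pm\geq 1$, one has
\[
\pi_-^*K_{Y_-}+d_-E_-\;\sim\;K_W\;\sim\;\pi_+^*K_{Y_+}+d_+E_+.
\]
Substituting the identities $E_\pm\sim m_\pm L_\pm-L_\mp$ coming from \eqref{eq:cr1} turns this into an equation in $\Pic(W)$ involving only $L_-$ and $L_+$. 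Equating the coefficients of $L_-$ and $L_+$ (they are linearly independent in $\Pic(W)$ because each $\pi_\pm$ is a nontrivial smooth blowup and $\Pic(Y_\pm)\simeq\Z$) yields the explicit formulas
\[
r_-\;=\;d_-m_-+d_+,\qquad r_+\;=\;d_+m_++d_-.
\]

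Plugging these into the adjunction expression gives
\[
K_X\cdot[\ell_\pm]\;=\;-r_\pm+\mu_\pm\;=\;m_\pm(1-d_\pm)-(d_\mp+1),
\]
which is strictly negative because $d_\pm\geq 1$ and $m_\pm\geq 1$. This proves the lemma. The only delicate point, and the step most in need of care, is the legitimacy of separating coefficients in the canonical bundle equation on $W$; this is why I would make explicit that under the Picard rank one hypothesis of Setup~\ref{set:cones} the classes $L_-,L_+$ are linearly independent in $\Pic(W)$, after which the rest is arithmetic.
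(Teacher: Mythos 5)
Your proof is correct, and it takes a genuinely different route from the paper's. The paper never computes $K_{Y_\pm}$: it evaluates $-K_P$ on the relevant orbit closures sitting inside the sink and source of the $\P^1$-bundle $P$ (adjunction along $W_\pm$ combined with the blowup formula for $K_W$, giving the value $\codim_{Y_\pm}Z_\pm$), and then transports the result to $X$ through the common resolution $p,q\colon P^\flat\to P,X$ via the discrepancy identity $p^*K_{P}-q^*K_{X}\sim(\codim Z_--2)E_-^\flat+(\codim Z_+-2)E_+^\flat$, whose intersection with the strict transforms is nonnegative; this yields positivity of $-K_X$ on $[\ell_\pm]$ only as a lower bound. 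You instead stay on $X$, use adjunction along $Y_\pm$ together with the normalization $Y_{\pm|Y_\pm}\simeq-\mu_\pm H_\pm$ built into Setup~\ref{set:cones}, and pin down $K_{Y_\pm}$ by equating the two blowup presentations of $K_W$; this produces the exact value $-K_X\cdot[\ell_\pm]=m_\pm(d_\pm-1)+d_\mp+1\geq 2$, which is sharper and, if one keeps track of the discarded discrepancy term in the paper's argument, agrees with it on the nose. Your coefficient-separation step is legitimate: under Setup~\ref{set:cones} the classes $L_-,L_+$ are indeed independent in $N^1(W)$, which one can also see directly from the intersection table $L_\pm\cdot e_\pm=0$, $L_\pm\cdot e_\mp=1$. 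Two small points to make explicit when writing this up: your symbol $r_\pm$ collides with the paper's use of $r_\pm$ for $\codim_{Y_\pm}Z_\pm$ in this very proof, so a different letter is advisable; and the inequality $d_\pm\geq 1$, i.e.\ $\codim Z_\pm\geq 2$, deserves a word of justification (it holds because $Z_\pm$ is the indeterminacy locus of a birational map between smooth varieties; the paper relies on the same fact implicitly when it drops the terms $(\codim Z_\pm-2)E_\pm^\flat$).
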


\begin{proof}
Denote by $r_\pm$ the codimension of $Z_\pm$ in $Y_\pm$. %, and let
%  $e_+$ be a line a fiber of the projective bundle $E_+\to Z_+$; its image $\ell$ in $Y_- \subset X$ is a curve of degree one with respect to $H_-$. 
By adjunction we have
\[-K_P \cdot [s_+(e_+)] = (-K_{W_-} + W_-) \cdot [s_+(e_+)] = r_+\]
where the last equality is obtained using  (\ref{eq:normW}) and the blowup formula for the canonical bundle of $W_-$.
Denote by $p:P^\flat \to P$ the blowup along $E_-$ and $E_+$, by $q:P^\flat \to X$ the blowdown, by $\tilde e_+$ the strict transform of $s_+(e_+)$ in $P^\flat$. We can compute $K_{P^\flat}$ in two different ways using the blow-up formula for $p$ and $q$ and obtain:
\begin{equation}
p^*K_{P}- q^*K_{X} \sim (r_--2)E_-^{\flat} +(r_+-2)E_+^{\flat}, 
\end{equation}
 Then
\begin{equation*} 
-K_X \cdot [\ell_+] = q^*K_{X} \cdot [\tilde e_+] \ge p^*K_{P} \cdot [\tilde e_+] = -K_P \cdot [s_+(e_+)] = r_+.
\end{equation*}
A similar argument shows that $-K_X\cdot [\ell_-]>0$. %has positive degree on the curves contained in $Y_+$.
\end{proof}

We may now construct the geometric realization $X^\sharp$ of Picard number one, and show that it is smooth if and only if $\psi$ is of type $(2,2)$ between two projective spaces. Note that in this case we have a complete classification of these realizations (see Section \ref{sec:BW3}). %\gotodo{Better quote ESB?? or did you mean realization instead of transformation?}

\begin{theorem}\label{thm:bispecial}
Let $(Y_\pm,H_\pm)$, $\psi$ and $X$ be as in Setup \ref{set:cones}, and assume that $\mu_-\mu_+\neq 0$. 
%Let $\psi:Y_- \dashrightarrow Y_+$ be a bispecial birational transformation with irreducible fundamental locus such that $\mu_-\mu_+ >0$, and let $X$ be its geometric realization constructed as in Theorem \ref{thm:gr1}. 
Then there exists a Mori contraction $\varphi_\sigma:X \to X^\sharp$, which contracts $Y_-$ and $Y_+$ to points. 

The variety $X^\sharp$ is smooth if and only if $Y_-$ and  $Y_+$ are projective spaces and $\psi$ is of type $(2,2)$. In this case there exists an ample $L \in Pic(X^\sharp)$ such that $(X^\sharp,L)$ is a bandwidth three variety with isolated extremal fixed points.
\end{theorem}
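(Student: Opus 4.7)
The plan is to produce $\varphi_\sigma$ as the extremal Mori contraction of the two-dimensional face $\sigma=\langle [\ell_-],[\ell_+]\rangle$ of $\cNE{X}$. By Proposition \ref{prop:cones}, $\sigma$ is a face of the Mori cone, and by Lemma \ref{lem:Kneg} it lies in the $K_X$-negative part of $\cNE{X}$; the Cone and Contraction Theorems therefore yield a morphism $\varphi_\sigma: X \to X^\sharp$ onto a normal projective variety, which contracts exactly those curves whose numerical class lies in $\sigma$. Since $\Pic(Y_\pm)\simeq\Z$, any curve contained in $Y_\pm$ is numerically proportional to $[\ell_\pm]$ and is hence contracted, so by the Rigidity Lemma each irreducible divisor $Y_\pm$ collapses to a single point $p_\pm$, while outside $Y_-\cup Y_+$ the morphism $\varphi_\sigma$ is an isomorphism. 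Equivariance is automatic: $\C^*$ is connected, so it acts trivially on $N_1(X)$ and preserves $\sigma$; the canonicity of the Mori contraction then forces $\varphi_\sigma$ to be $\C^*$-equivariant.

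The heart of the argument is the smoothness analysis of $X^\sharp$, where the only potentially singular points are $p_\pm$. For the ``only if'' direction I would invoke the classification of divisorial extremal contractions of a smooth variety to a smooth point (Ando, Wi\'sniewski): such a contraction must be a smooth blowup, forcing the exceptional divisor to be $\P^n$ with normal bundle $\cO_{\P^n}(-1)$. Since $\cN_{Y_\pm|X}\simeq -\mu_\pm H_\pm$ by Setup \ref{set:cones}, this requires $Y_\pm\simeq\P^n$ and $\mu_\pm=1$, that is, $\psi$ of type $(2,2)$. The converse is classical: two disjoint copies of $\P^n$ inside a smooth variety with $(-1)$-normal bundle can be simultaneously contracted by Nakano's contractibility criterion to a smooth target, which by uniqueness of the Mori contraction must coincide with $X^\sharp$.

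For the final claim, assume $\psi$ is of type $(2,2)$ between projective spaces, so $\mu_\pm=1$. Consider the divisor $H':=H+Y_-+Y_+$ on $X$. Proposition \ref{prop:cones} shows that $H'$ is nef; moreover, the intersection table in Step 5 of the proof of Theorem \ref{thm:gr1} gives $H'\cdot\ell_\pm=0$ and $H'\cdot c>0$ for every other extremal class $c\in\cNE{X}$, so $H'=\varphi_\sigma^*L$ for a unique ample $L\in\Pic(X^\sharp)$. The $\C^*$-action on $X^\sharp$ is still equalized, its fixed locus consists of the inner components $S_\pm$ together with the isolated sink and source $p_\pm$, and the chain of orbit closures $\delta_-,\varepsilon,\delta_+$ joining $p_-$ to $p_+$ has $L$-degrees $1,1,1$ by the same intersection table; Lemma \ref{lem:AMFM} then gives $\mu_L(p_+)-\mu_L(p_-)=3$, so $(X^\sharp,L)$ is a bandwidth three variety with isolated extremal fixed points.

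The main obstacle I expect is the ``only if'' direction in the smoothness analysis, which requires classification results for divisorial extremal contractions to a smooth point and careful use of the hypothesis $\Pic(Y_\pm)\simeq\Z$ to pin down the fiber geometry of $\varphi_\sigma$. Everything else is essentially combinatorial once the cones and the intersection numbers recorded in Section \ref{sec:geomreal} are in hand: the existence and equivariance of the contraction follow from standard MMP, and the bandwidth computation is a direct evaluation via Lemma \ref{lem:AMFM}.
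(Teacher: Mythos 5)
Your proposal is correct and follows essentially the same route as the paper: the contraction of the $K_X$-negative face $\sigma=\langle[\ell_-],[\ell_+]\rangle$ furnished by Lemma \ref{lem:Kneg} and Proposition \ref{prop:cones}, the smooth-point criterion $(Y_\pm,\cN_{Y_\pm|X})\simeq(\P^{n},\cO(-1))$ combined with $\cN_{Y_\pm|X}\simeq-\mu_\pm H_\pm$, and the bandwidth computation via Lemma \ref{lem:AMFM} all match the paper's argument. The only point you gloss over is why $\varphi_\sigma$ contracts nothing outside $Y_-\cup Y_+$ (the Rigidity Lemma alone does not give this); the paper checks it by noting that a contracted curve $C\not\subset Y_\pm$ would satisfy $H'\cdot C=0$, hence $H\cdot C=Y_-\cdot C=Y_+\cdot C=0$ by nefness and effectivity, which is impossible.
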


\begin{proof}
As a consequence of  Lemma \ref{lem:Kneg}, we have a Mori contraction  $\varphi_\sigma:X \to X^\sharp$ associated to the face $\sigma$, which contracts $Y_\pm$ to points. On the other hand, if $C$ is a curve not contained in $Y_\pm$ contracted  by $\varphi_\sigma$ we have $H'\cdot C=0$, from what if follows that $H\cdot C=Y_\pm\cdot C=0$, a contradiction. We conclude that $\varphi_\sigma$ is an isomorphism outside of $Y_\pm$.

Let $L \in \Pic(X^\sharp)$ be a line bundle such that $\varphi_\sigma^*L=H'$. Then the bandwidth of the induced action on  $(X^\sharp, L)$ can be computed intersecting $L$  with a general orbit (which is the image of $\gamma$), obtaining
\[L \cdot \gamma = \mu + \mu/\mu_- + \mu/\mu_+. \]
The divisor $Y_\pm$ is contracted to a smooth point if and only if $(Y_\pm, Y_\pm|_{Y_\pm}) \simeq (\P^{n-1}, \cO_{\P^{n-1}}(-1))$; by formula (\ref{normYpm}) the condition on the normal bundle is satisfied if and only if $\mu_\pm=1$, and in this case  the bandwidth of the $\C^*$-action on  $(X^\sharp, L)$ is three by Lemma \ref{lem:AMFM}. Summing up, we have shown that every bispecial transformation $\psi$ between two smooth projective varieties of Picard number one admits a geometric realization with isolated extremal fixed points, which is smooth if and only if $\psi$ is a bispecial Cremona transformation of type $(2,2)$. 
\end{proof}

\subsection{Bispecial transformations with $\mu_+= 0$}
%Transformations of type (2,1) from the projective space}
\label{ssec:21}

In the situation of Setup \ref{set:cones}, we will consider now the case in which $\mu_+=0$; note that in this case we know that $\mu_->0$ (see Definition \ref{def:bispecial}). In analogy with the previous case, we will study contractions of the geometric realization of criticality three; we will show that $\psi$ corresponds to the birational map associated to a $\C^*$-action on a fibration over $\P^1$ of Picard number two, and study its smoothness.  %\lstodo{Say something about $\mu_-=\mu_+=0$ }

We will consider now the extremal ray $R:=\R_{\geq 0}[\ell_-]\subset \NE(X)$, and the $2$-dimensional face $\sigma\subset \NE(X)$ generated by $[\ell_-],[\ell_+]$. As in Lemma  \ref{lem:Kneg}, both faces are $K_X$-negative, and we have two associated Mori contractions $\varphi_R:X \to X^\sharp$, and $\varphi_\sigma:X \to T$; they are supported, respectively, by the nef line bundles $H':=\mu_-H+Y_-+Y_+$ and $Y_+$ (see Proposition \ref{prop:cones}). 

The map $\varphi_R$ contracts the curves in the class $[\ell_-]$, therefore it contracts $Y_-$ to a point. If it contracted a curve $C$ not contained in $Y_-$, we would have $H'\cdot C=0$. Since $Y_+$ and $H$ are nef, this implies that  $H\cdot C=Y_\pm\cdot C=0$, a contradiction.   
 It follows that $\varphi_R$ is the contraction of $Y_-$ to a point. In particular, $X^\sharp$ is smooth if and only if $(Y_-, Y_-|_{Y_-}) \simeq (\P^{n-1}, \cO_{\P^{n-1}}(-1))$ which (again by formula (\ref{normYpm})) is equivalent to say that $\mu_-=1$, that is if $m_-=2$.

On the other hand we consider the contraction $\varphi_\sigma:X\to T$, that factors via $\varphi_R:X \to X^\sharp$. The fact that $\varphi_\sigma$ is supported by $Y_+$, together with the equality $Y_+|_{Y_+}=\cO_{Y_+}$ (see Step 5 of Theorem \ref{thm:gr1}) implies that $\varphi_\sigma:X\to T$ is a fiber type contraction, and that $T$ is a curve. Since $Y_+\cdot \delta_+=1$, we obtain that $T=\varphi_\sigma(\delta_+)$ is necessarily rational, that is $T\simeq \P^1$.

Summing up we get the last statement of the paper:

 \begin{theorem}\label{thm:bispecial2}
Let $(Y_\pm,H_\pm)$, $\psi$ and $X$ be as in Setup \ref{set:cones}, and assume that $\mu_->0$, $\mu_+= 0$. 
%Let $\psi:Y_- \dashrightarrow Y_+$ be a bispecial birational transformation with irreducible fundamental locus such that $\mu_-\mu_+ >0$, and let $X$ be its geometric realization constructed as in Theorem \ref{thm:gr1}. 
Then there exists a Mori contraction $\varphi_R:X \to X^\sharp$, which is the contraction of $Y_-$ to a point. Moreover, $X^\sharp$ is a variety of Picard number two admitting a fiber type Mori contraction onto $\P^1$, that is smooth if and only if $Y_-$ is a projective space and $\psi$ is of type $(2,1)$.
\end{theorem}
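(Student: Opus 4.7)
The approach mirrors that of Theorem \ref{thm:bispecial}, but now uses the $\mu_+=0$ description of the cones from Proposition \ref{prop:cones}: $\NE(X)=\langle[\delta_-],[\ell_-],[\delta_+]\rangle$ and $\Nef(X)=\langle H,H+\tfrac{Y_-}{\mu_-},Y_+\rangle$. The two contractions I need come from the extremal ray $R:=\R_{\geq 0}[\ell_-]$ and the $2$-dimensional face $\sigma$ of $\NE(X)$ supported by $Y_+$; a short intersection-number check against $H$, $Y_-$ and $Y_+$ shows that $[\ell_+]=\mu_-[\delta_-]+[\ell_-]$ in $N_1(X)$, so $\sigma$ coincides with the face $\langle[\ell_-],[\ell_+]\rangle$ and contains $R$. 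The $K_X$-negativity of $R$ (and hence of $\sigma$) is checked exactly as in Lemma \ref{lem:Kneg}: the blowup formula $p^*K_P-q^*K_X\sim(r_--2)E_-^\flat+(r_+-2)E_+^\flat$ applied to the strict transform of $s_\pm(e_\pm)$ yields $-K_X\cdot[\ell_-]\geq r_->0$, and the Cone Theorem then produces Mori contractions $\varphi_R\colon X\to X^\sharp$ and $\varphi_\sigma\colon X\to T$ with $\varphi_\sigma$ factoring through $\varphi_R$.

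To prove that $\varphi_R$ is the contraction of $Y_-$ to a point, I would use the nef supporting divisor $H':=\mu_- H+Y_-+Y_+=\mu_-(H+\tfrac{Y_-}{\mu_-})+Y_+$, which satisfies $H'\cdot[\ell_-]=0$ and $H'\cdot[\delta_\pm]=1$. Since $\Pic(Y_-)=\Z\cdot H_-$ and $[\ell_-]$ has $H_-$-degree one, every curve in $Y_-$ has class a positive multiple of $[\ell_-]$, so $Y_-$ is swept out by contracted curves. Conversely, any curve $C\not\subset Y_-$ with $[C]\in R$ would satisfy $Y_-\cdot C<0$ by \eqref{normYpm}, contradicting that $Y_-$ meets $C$ properly as an effective divisor. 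Thus the exceptional locus of $\varphi_R$ is exactly $Y_-$, and $\rho(X^\sharp)=\rho(X)-1=2$. By the standard criterion for contracting a divisor to a smooth point, $X^\sharp$ is smooth at the image of $Y_-$ if and only if $(Y_-,\cN_{Y_-|X})\simeq(\P^n,\cO(-1))$; using $\cN_{Y_-|X}\simeq\cO_{Y_-}(-\mu_- H_-)$ from \eqref{normYpm}, this is equivalent to $Y_-\simeq\P^n$ and $\mu_-=1$, i.e., $\psi$ being of type $(2,1)$.

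For the fibration I would exploit the supporting divisor of $\varphi_\sigma$, which is $Y_+$, together with $Y_+|_{Y_+}\simeq\cO_{Y_+}$ (the $\mu_+=0$ case of \eqref{normYpm}). The triviality of the normal bundle along $Y_+$ means that $\varphi_\sigma|_{Y_+}$ collapses $Y_+$ to a single point of $T$. On the other hand $\delta_+\cdot Y_+=1$, so $\varphi_\sigma(\delta_+)$ is a non-constant curve; hence $\dim T=1$, and $\rho(T)=\rho(X)-\dim\sigma=1$, forcing $T\simeq\P^1$. Since $R\subset\sigma$, factoring out $\varphi_R$ produces the desired fiber type Mori contraction $X^\sharp\to\P^1$.

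The main obstacle I anticipate is the second paragraph: ruling out any curve outside $Y_-$ from the exceptional locus of $\varphi_R$. The argument is somewhat delicate because $Y_-$ is not nef, so one must combine the sign in \eqref{normYpm} with the fact that an effective divisor has non-negative intersection with any curve it does not contain. Once this has been settled, the Picard number computation, the smoothness criterion, and the identification of the base of the fibration as $\P^1$ become essentially formal consequences of Proposition \ref{prop:cones} and general Mori theory.
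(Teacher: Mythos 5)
Your proposal is correct and follows essentially the same route as the paper: the same extremal ray $R=\R_{\geq 0}[\ell_-]$ and face $\sigma$ supported by $Y_+$, the same nef divisor $H'=\mu_-H+Y_-+Y_+$, the same appeal to Lemma \ref{lem:Kneg}, the same smoothness criterion via \eqref{normYpm}, and the same use of $Y_+|_{Y_+}\simeq\cO_{Y_+}$ and $Y_+\cdot\delta_+=1$ to get the fibration onto $\P^1$. The only (harmless) deviation is how you exclude curves outside $Y_-$ from $\Exc(\varphi_R)$ — you use $Y_-\cdot[\ell_-]=-\mu_-<0$ against effectivity of $Y_-$, whereas the paper uses $H'\cdot C=0$ together with nefness of $H$ and $Y_+$ to force $C\equiv 0$; both are valid.
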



\begin{thebibliography}{10}

\bibitem{Wlodarczyk-et-al}
Dan Abramovich, Kalle Karu, Kenji Matsuki, and Jaros{\l}aw W{\l}odarczyk.
\newblock Torification and factorization of birational maps.
\newblock {\em J. Amer. Math. Soc.}, 15(3):531--572, 2002.

\bibitem{B_R}
Lorenzo Barban and Eleonora~A. Romano.
\newblock Toric non-equalized flips associated to $\mathbb{C}^*$-actions.
\newblock {\em Preprint ArXiv:{\tt 2104.14442}}, 2021.

\bibitem{BS}
Mauro~C. Beltrametti and Andrew~J. Sommese.
\newblock {\em The adjunction theory of complex projective varieties},
  volume~16 of {\em De Gruyter Expositions in Mathematics}.
\newblock Walter de Gruyter \& Co., Berlin, 1995.

\bibitem{BB}
Andrzej Bia{\l}ynicki-Birula.
\newblock Some theorems on actions of algebraic groups.
\newblock {\em Ann. of Math. (2)}, 98:480--497, 1973.

\bibitem{BBS2}
Andrzej Bia{\l}ynicki-Birula and Joanna \'{S}wi{\c{e}}cicka.
\newblock Complete quotients by algebraic torus actions.
\newblock In {\em Group actions and vector fields ({V}ancouver, {B}.{C}.,
  1981)}, volume 956 of {\em Lecture Notes in Math.}, pages 10--22. Springer,
  Berlin, 1982.

\bibitem{Blan59}
Andr\'{e} Blanchard.
\newblock Sur les vari\'{e}t\'{e}s analytiques complexes.
\newblock {\em Ann. Sci. Ecole Norm. Sup. (3)}, 73:157--202, 1956.

\bibitem{Bourb}
Nicolas Bourbaki.
\newblock {\em \'{E}l\'ements de math\'ematique. {F}asc. {XXXIV}. {G}roupes et
  alg\`ebres de {L}ie. {C}hapitre {IV}: {G}roupes de {C}oxeter et syst\`emes de
  {T}its. {C}hapitre {V}: {G}roupes engendr\'es par des r\'eflexions.
  {C}hapitre {VI}: syst\`emes de racines}.
\newblock Actualit\'es Scientifiques et Industrielles, No. 1337. Hermann,
  Paris, 1968.

\bibitem{Br17}
Michel Brion.
\newblock Some structure theorems for algebraic groups.
\newblock In {\em Algebraic groups: structure and actions}, volume~94 of {\em
  Proc. Sympos. Pure Math.}, pages 53--126. Amer. Math. Soc., Providence, RI,
  2017.

\bibitem{BWW}
Jaros{\l}aw Buczy{\'n}ski, Jaros{\l}aw~A. Wi{\'s}niewski, and Andrzej Weber.
\newblock Algebraic torus actions on contact manifolds.
\newblock {\em To appear in J. Differ. Geom. Preprint ArXiv:{\tt 1802.05002}},
  2018.

\bibitem{CARRELL}
James~B. Carrell.
\newblock Torus actions and cohomology.
\newblock In {\em Algebraic quotients. {T}orus actions and cohomology. {T}he
  adjoint representation and the adjoint action}, volume 131 of {\em
  Encyclopaedia Math. Sci.}, pages 83--158. Springer, Berlin, 2002.

\bibitem{Coble}
A.~B. Coble.
\newblock Cremona transformations and applications to algebra, geometry, and
  modular functions.
\newblock {\em Bull. Amer. Math. Soc.}, 28(7):329--364, 1922.

\bibitem{Cre1}
Luigi Cremona.
\newblock Sulle trasformazioni geometriche delle figure piane. {N}ota {I}.
\newblock {\em Memorie dell'Accademia delle Scienze dell'Istituto di Bologna},
  tomo II(serie II):621--630, 1863.

\bibitem{Cre2}
Luigi Cremona.
\newblock Sulle trasformazioni geometriche delle figure piane. {N}ota {II}.
\newblock {\em Memorie dell'Accademia delle Scienze dell'Istituto di Bologna},
  tomo V(serie II):3--35, 1865.

\bibitem{ESB}
Lawrence Ein and Nicholas~I. Shepherd-Barron.
\newblock Some special {C}remona transformations.
\newblock {\em Amer. J. Math.}, 111(5):783--800, 1989.

\bibitem{IVERSEN}
Birger Iversen.
\newblock A fixed point formula for action of tori on algebraic varieties.
\newblock {\em Inv. Math.}, 16(3):229--236, 1972.

\bibitem{KKLV}
Friedrich Knop, Hanspeter Kraft, Domingo Luna, and Thierry Vust.
\newblock Local properties of algebraic group actions.
\newblock {\em Algebraische Transformationsgruppen und Invariantentheorie,
  Birkh{\"a}user Basel}, pages 63--75, 1989.

\bibitem{MMW}
Mateusz Micha{\l}ek, Leonid Monin, and Jaros{\l}aw~A. Wi\'sniewski.
\newblock Maximum likelihood degree and space of orbits of a
  $\mathbb{C}^*$-action.
\newblock {\em SIAM J. Appl. Algebra Geometry}, 1(5):60--85, 2021.

\bibitem{MFK}
David Mumford, John Fogarty, and France Kirwan.
\newblock {\em Geometric invariant theory}, volume~34 of {\em Ergebnisse der
  Mathematik und ihrer Grenzgebiete (2) [Results in Mathematics and Related
  Areas (2)]}.
\newblock Springer-Verlag, Berlin, third edition, 1994.

\bibitem{WORS2}
Gianluca Occhetta, Eleonora~A. Romano, Luis E.~Sol\'{a} Conde, and Jaros\l
  aw~A. Wi\'{s}niewski.
\newblock High rank torus actions on contact manifolds.
\newblock {\em Selecta Math. (N.S.)}, 27(1):Paper No. 10, 33, 2021.

\bibitem{WORS3}
Gianluca Occhetta, Eleonora~A. Romano, Luis~E. Sol\'{a}~Conde, and Jaros\l
  aw~A. Wi\'{s}niewski.
\newblock Small modifications of {M}ori dream spaces arising from
  {$\Bbb{C}^*$}-actions.
\newblock {\em Eur. J. Math.}, 8(3):1072--1104, 2022.

\bibitem{WORS1}
Gianluca Occhetta, Eleonora~A. Romano, Luis~E. Sol\'{a}~Conde, and Jaros\l
  aw~A. Wi\'{s}niewski.
\newblock Small bandwidth {$\Bbb C^*$}-actions and birational geometry.
\newblock {\em J. Algebraic Geom.}, 32(1):1--57, 2023.

\bibitem{PiRu}
Luc Pirio and Francesco Russo.
\newblock Varieties {$n$}-covered by curves of degree {$\delta$}.
\newblock {\em Comment. Math. Helv.}, 88(3):715--757, 2013.

\bibitem{RW}
Eleonora~A. Romano and Jaros\l aw~A. Wi\'{s}niewski.
\newblock Adjunction for varieties with a {$\Bbb{C}^*$} action.
\newblock {\em Transform. Groups}, 27(4):1431--1473, 2022.

\bibitem{Thaddeus}
Michael Thaddeus.
\newblock Complete collineations revisited.
\newblock {\em Math. Ann.}, 315(3):469--495, 1999.

\bibitem{Wlodarczyk}
Jaros{\l}aw W{\l}odarczyk.
\newblock Birational cobordisms and factorization of birational maps.
\newblock {\em J. Algebraic Geom.}, 9(3):425--449, 2000.

\end{thebibliography}
\end{document}